\theoremstyle{plain}
    \newtheorem{thm}{Theorem}[section]
    \newtheorem{claim}[thm]{Claim}
    \newtheorem{corollary}[thm]{Corollary}
    \newtheorem{lemma}[thm]{Lemma}
    \newtheorem{proposition}[thm]{Proposition}
\theoremstyle{definition}
    \newtheorem{remark}[thm]{Remark}
\theoremstyle{remark}
    \newtheorem{setup}[thm]{}
\newcommand{\R}{\mathbb{R}}
\newcommand{\Z}{\mathbb{Z}}
\newcommand{\BK}{\mathcal B\mathcal K}
\newcommand{\OO}{\mathrm{O}}
\newcommand{\cC}{\mathcal{C}}
\newcommand{\cK}{\mathcal{K}}
\newcommand{\alb}{\operatorname{alb}}
\newcommand{\Aut}{\operatorname{Aut}}
\newcommand{\Bir}{\operatorname{Bir}}
\newcommand{\GL}{\operatorname{GL}}
\newcommand{\id}{\operatorname{id}}
\newcommand{\Ker}{\operatorname{Ker}}
\newcommand{\Eff}{\overline{\operatorname{Eff}}}
\newcommand{\Mov}{\overline{\operatorname{Mov}}}
\newcommand{\Nef}{\operatorname{Nef}}
\newcommand{\NS}{\operatorname{NS}}
\newcommand{\Pic}{\operatorname{Pic}}
\newcommand{\Per}{\operatorname{Per}}
\newcommand{\rank}{\operatorname{rank}}
\newcommand{\torsion}{\operatorname{torsion}}
\newcommand{\Alb}{\operatorname{Alb}}
\newcommand{\ratmap}{\dashrightarrow}
\newcommand{\bbR}{\mathbb{R}}
\newcommand{\bbC}{\mathbb{C}}
\newcommand{\bbN}{\mathbb{N}}
\newcommand{\bbQ}{\mathbb{Q}}
\newcommand{\bbZ}{\mathbb{Z}}
\newcommand{\bbP}{\mathbb{P}}
\begin{document}
\title[Criteria for the existence of equivariant fibrations]
{Criteria for the existence of equivariant fibrations on algebraic surfaces
and hyperk\"ahler manifolds
and equality of automorphisms up to powers - a dynamical viewpoint}

\author{Fei Hu}
\address{\textsc{Department of Mathematics} \endgraf \textsc{National University of Singapore, 10 Lower Kent Ridge Road, Singapore 119076}}
\email{hf@u.nus.edu}

\author{JongHae keum}
\address{\textsc{School of Mathematics} \endgraf \textsc{Korea Institute for Advanced Study, Seoul 130-722, Korea}}
\email{jhkeum@kias.re.kr}

\author{De-Qi Zhang}
\address{\textsc{Department of Mathematics} \endgraf \textsc{National University of Singapore, 10 Lower Kent Ridge Road, Singapore 119076}}
\email{matzdq@nus.edu.sg}


\dedicatory{}

\begin{abstract}
Let $X$ be a projective surface or a hyperk\"ahler manifold and $G \le \Aut(X)$.
We give a necessary and sufficient condition for the existence of a non-trivial $G$-equivariant fibration on $X$.
We also show that two automorphisms $g_i$ of positive entropy and polarized by the same nef divisor
are the same up to powers, provided that either $X$ is not an abelian surface or the $g_i$
share at least one common periodic point. The surface case is known among experts,
but we treat this case together with the hyperk\"ahler case using the same language of hyperbolic lattice
and following Ratcliffe \cite{Rat} or Oguiso \cite{Oguiso07}.

This arXiv version contains proofs omitted in the print version.
\end{abstract}

\subjclass[2010]{
14J50, 
32M05, 
32H50, 
37B40. 
}
\keywords{automorphism, complex dynamics, iteration, topological entropy}


\maketitle

\section{Introduction}
\noindent We work over the field $\bbC$ of complex numbers.
Let $X$ be a smooth projective variety or a compact K\"ahler manifold of dimension $n$.
For a subgroup $G$ of the automorphism group $\Aut(X)$ of $X$ and a $G$-invariant
subgroup $V$ of some cohomology group of $X$, we denote the induced action of $G$ on $V$ by $G|V$.
For example the natural pullback action of $\Aut(X)$ on $H^*(X, \bbC)$
is denoted by $\Aut(X) | H^*(X, \bbC)$.
For an automorphism $g\in \Aut(X)$, let
$$\rho(g) = \rho(g^*) := \max \{|\lambda| : \lambda \text{ is an eigenvalue of } g^*|\bigoplus_{i\ge 0}H^i(X, \bbC) \}$$
be the spectral radius of the pullback action $g^*$ on the total cohomology ring of $X$.
We define the (topological) {\it entropy} as $h(g) = \log \rho(g)$.
By the fundamental work of Gromov and Yomdim, the above definition is equivalent to the original dynamical definition of entropy (cf. \cite{Gromov03}, \cite{Yomdin87}).
The $i$-th {\it dynamical degree} is defined as $d_i(g) := \rho(g^* | H^{i,i}(X, \bbC))$.

An element $g\in \Aut(X)$ is of {\it null entropy} (resp. {\it positive entropy}) if its entropy $h(g) = 0$ (resp. $>0)$.
For a subgroup $G$ of $\Aut(X)$, we define the {\it null-subset} of $G$ as
$$N(G):=\{g\in G : g \text{ is of null entropy, i.e., }h(g)=0\}.$$
In general, $N(G)$ may {\it not} be a subgroup of $G$.
A group $G \le \Aut(X)$ is of null entropy if every $g\in G$ is of null entropy, i.e., $G=N(G)$.

It is known that
$$h(g) = \max_{0 \le i \le n} \log d_i(g)$$
and $h(g) > 0$ if and only if $d_i(g) > 1$ for some (or equivalently for all) $i \in \{1, \dots, n-1\}$; especially $h(g) > 0$ if and only if $d_1(g) > 1$.
When $X$ is projective, $d_1(g) = \rho(g^* | \NS_{\bbC}(X))$,
where $\NS(X)$ is the N\'eron-Severi group of $X$ and $\NS_{\bbC}(X) := \NS(X) \otimes_{\bbZ} \bbC$.
In particular, for an automorphism $g$ of a projective surface $X$, we have $$\rho(g) = d_1(g) = \rho(g^* | \NS_\bbC(X));$$
see \cite{DS04} and the references therein.

By the classification of surfaces, a compact complex surface $S$ has an automorphism of positive entropy only if
$S$ is bimeromorphic to a rational surface, a $K3$ surface, an Enriques surface or a complex torus (cf. \cite{Cantat99}).

Let $X$ be a hyperk\"ahler manifold. Thanks to Beauville, Bogomolov and Fujiki, there exists a natural bilinear (primitive)
integral form $q_X$
of signature $(3, b_2-3)$ on the second integral cohomology group $H^2(X, \bbZ)$.
Moreover, if $X$ is projective, then the restriction of $q_X$ on the N\'eron-Severi group $\NS(X)$ is non-degenerate of signature $(1,\rho(X)-1)$,
where $\rho(X)$ is the Picard number of $X$ (cf. \cite{Huybrechts99}).

Hence algebraic surfaces and projective hyperk\"ahler manifolds are very similar when we focus on their intersection forms: these forms are both represented by hyperbolic lattices.

Below is a hyperk\"ahler analogue of Theorem \ref{Surface equiv} for surfaces.
It uses the deep results due to Bayer - Macr{\`{\i}}, Markman, Matsushita and Yoshioka (cf. \cite{BM13}, \cite{Markman13}, \cite{Matsushita13} and \cite{Yoshioka12}).
Recall that a hyperk\"ahler manifold is of
type $K3^{[n]}$ (resp. generalized Kummer) if it is deformation equivalent
to $S^{[n]}$ for some $K3$ surface $S$ (resp. to a generalized Kummer variety); see details from the above references.
Since a bimeromorphic map $g$ on a hyperk\"ahler manifold $X$ (or more generally a birational map on a minimal projective terminal variety)
is isomorphic in codimension one (cf. \cite{Huybrechts03}), it induces an isomorphism on $\NS(X)$.
So we can define its first dynamical degree as $d_1(g) := \rho(g^* | \NS_\bbC(X))$.
We say $g$ (resp. $G$) is of {\it null entropy} if $d_1(g) = 1$ (resp. $d_1(g) = 1$ for every $g \in G$).

\begin{thm}\label{HK equiv}
Let $X$ be a $2n$-dimensional projective hyperk\"ahler manifold of type $K3^{[n]}$ or of type generalized Kummer.
Let $G$ be an infinite subgroup of $\Bir(X)$.
Then $G$ is of null entropy if and only if there is a rational Lagrangian fibration (see {\rm\ref{rlf}}) $\phi:X\ratmap \bbP^n$ such that the birational action of $G$ on $X$ descends to a
biregular action on $\bbP^n$, i.e., such that the
following diagram commutes:
\[\xymatrix{
  X \ar@{-->}[r]^{g} \ar@{-->}[d]_{\phi} & X \ar@{-->}[d]^{\phi} \\
  \bbP^n \ar[r]^{\alpha(g)} & \bbP^n   }
\]
where $g$ is an element of $G$ and $\alpha$ is a group homomorphism from $G$ to $\Aut(\bbP^n)$.
\end{thm}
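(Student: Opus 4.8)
The plan is to translate everything into the hyperbolic lattice $(\NS(X), q_X)$, which by the discussion above has signature $(1, \rho(X)-1)$, and to exploit the dictionary between the dynamics of $g^* | \NS_\bbC(X)$ and the geometry of isometries of the hyperbolic space $\mathbb{H}^{\rho(X)-1}$ attached to it. Recall that for $g \in \Bir(X)$ one has $d_1(g) = \rho(g^*|\NS_\bbC(X))$, so $g$ is of null entropy precisely when the induced isometry $g^*$ of $\mathbb{H}^{\rho(X)-1}$ is non-loxodromic (elliptic or parabolic), and a primitive isotropic class $L \in \NS(X)$ with $q_X(L)=0$ corresponds to a point of the boundary $\partial\mathbb{H}^{\rho(X)-1}$. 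With this dictionary in place I would prove the two implications separately, the forward one (producing the fibration) being the substantial part.

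For the converse, suppose a rational Lagrangian fibration $\phi\colon X \ratmap \bbP^n$ exists with $G$ descending via $\alpha\colon G \to \Aut(\bbP^n)$. Put $L := \phi^*\mathcal{O}_{\bbP^n}(1) \in \NS(X)$. Since the general fibre is Lagrangian, $L^{n+1}=0$, so the Fujiki relation $\int_X L^{2n} = c_X\, q_X(L)^n$ forces $q_X(L)=0$; thus $L$ is a nonzero isotropic class. As $\alpha(g)^*\mathcal{O}_{\bbP^n}(1)=\mathcal{O}_{\bbP^n}(1)$ and $\phi\circ g = \alpha(g)\circ\phi$, we obtain $g^*L = L$ for every $g\in G$. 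It then remains to record the purely lattice-theoretic fact that an isometry of a hyperbolic lattice fixing a nonzero isotropic vector cannot be loxodromic: on the invariant hyperbolic plane spanned by the two fixed boundary rays of a loxodromic isometry the isotropic eigenvectors have eigenvalues $\lambda>1$ and $\lambda^{-1}$, while its $(+1)$-eigenspace lies in the negative-definite orthogonal complement and so contains no isotropic vector. Hence each $g^*$ is non-loxodromic, i.e. $d_1(g)=1$, and $G$ is of null entropy.

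For the forward implication, assume $G$ is infinite and of null entropy, and set $G^* := G | \NS_\bbC(X) \le \OO(\NS(X))$. First I would check that $G^*$ is infinite: the kernel of $\Bir(X)\to\OO(H^2(X,\bbZ))$ is finite (Huybrechts), while every element of $\Ker(\Bir(X)\to\OO(\NS(X)))$ has finite order on the transcendental lattice $T(X)$, so $\Ker(\Bir(X)\to\OO(\NS(X)))$ is finite and $|G|=\infty$ forces $|G^*|=\infty$. Now $G^*$ is an infinite discrete group of isometries of $\mathbb{H}^{\rho(X)-1}$ with no loxodromic element. Following the classification of isometries in Ratcliffe \cite{Rat}, I would rule out the elliptic case (a discrete group with a fixed interior point is finite) and the lineal case (a discrete group preserving a geodesic without loxodromic elements fixes the axis pointwise, hence is again finite), leaving only the parabolic case: $G^*$ fixes a unique point $\xi\in\partial\mathbb{H}^{\rho(X)-1}$. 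This $\xi$ is a primitive isotropic ray $\bbR_{\ge 0}L$, and since each integral isometry satisfies $g^*L = c_g L$ with $c_g, c_g^{-1}$ algebraic integers, necessarily $c_g=1$; thus $G$ fixes the primitive isotropic class $L \in \NS(X)$.

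It remains to promote $L$ to a $G$-equivariant rational Lagrangian fibration, and this is where I expect the main difficulty. After applying a suitable birational automorphism I may assume $L$ lies in the movable cone, and then the deep results of Bayer--Macr{\`\i}, Markman, Matsushita and Yoshioka (available exactly because $X$ is of type $K3^{[n]}$ or generalized Kummer) yield a birational model $X'$ of $X$ on which $L$ becomes nef together with a Lagrangian fibration $X'\to\bbP^n$ with $\phi^*\mathcal{O}(1)=L$; composing gives the rational Lagrangian fibration $\phi\colon X\ratmap\bbP^n$. Since birational maps of hyperk\"ahler manifolds are isomorphisms in codimension one, each $g\in G$ acts on $H^0(X', L)\cong H^0(\bbP^n,\mathcal{O}(1))$ linearly, this action being well defined as $g^*L = L$; it therefore descends to $\alpha(g)\in\PGL_{n+1}=\Aut(\bbP^n)$, and the relation $\phi\, g g' = \alpha(g)\alpha(g')\,\phi$ together with dominance of $\phi$ shows that $\alpha$ is a homomorphism making the diagram commute. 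The delicate points I would treat with care are the compatibility of the $G$-action with the chosen model $X'$ (so that $G$ genuinely descends through the flops used to make $L$ nef) and the identification of the base of the Lagrangian fibration with $\bbP^n$; these, rather than the hyperbolic geometry, are the real obstacle.
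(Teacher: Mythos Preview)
Your outline matches the paper's: both directions go through the hyperbolic lattice $(\NS(X),q_X)$, the `if' part uses that a nonzero class in the closed positive cone fixed by $g^*$ forces $d_1(g)=1$, and the `only if' part produces a unique $G$-fixed primitive isotropic class and then invokes Matsushita (together with Markman, Bayer--Macr\`{\i}, Yoshioka) to obtain the rational Lagrangian fibration, with $G$ descending to $\Aut(\bbP^n)$ via its linear action on global sections.

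There is, however, one genuine gap. The sentence ``After applying a suitable birational automorphism I may assume $L$ lies in the movable cone'' cannot work as written: every $\sigma\in\Bir(X)$, and more generally every birational map between hyperk\"ahler manifolds, is an isomorphism in codimension one, so $\sigma^*$ preserves $\Mov(X)$; no such map will carry a class into $\Mov(X)$ that was not already there. What is actually needed --- and what the paper supplies --- is the observation that $\Mov(X)$ is itself a nontrivial closed $G$-invariant subcone of the closed positive cone. Since $G|\NS(X)$ is virtually unipotent (hence virtually solvable), the Lie--Kolchin theorem for cones yields a common $G$-eigenray inside $\Mov(X)$, necessarily with eigenvalue $1$ by the null-entropy hypothesis; by the uniqueness of the parabolic fixed point this ray coincides with $\bbR_{>0}L$, so in fact $L\in\Mov(X)$ automatically. (In your hyperbolic language: the unique boundary fixed point of a parabolic group lies in the closure of every orbit, hence in the closed $G$-invariant set obtained by projectivizing $\Mov(X)$.) Only once this is secured do the cited structure theorems for $K3^{[n]}$ and generalized Kummer type apply. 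The remaining delicate points you flag are handled exactly as you indicate, by conjugating $G$ through the birational map $\tau:X\ratmap X'$.
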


A birational map $g \in \Bir(X)$ which is isomorphic in codimension one, is said to be {\it polarized by a divisor} $D$
if $g^*D \equiv \lambda D$ (numerical equivalence) for some $\lambda > 0$.

Our next result is a hyperk\"ahler analogue of Theorem \ref{Surface auto} for surfaces.
See Remark \ref{rHK auto} for the converse of Theorem \ref{HK auto}.

\begin{thm}\label{HK auto}
Let $X$ be a projective hyperk\"ahler manifold. Assume $D$ is a numerically nonzero $\bbR$-divisor
such that $q_X(D) \ge 0$ for the Beauville-Bogomolov quadratic form $q_X$ (this holds when $D$ is movable or nef).
Assume further that
$g_i \in \Bir(X)$ ($i = 1, 2$)
are of positive entropy and polarized by $D$.
Then $g_1^{t_1}=g_2^{t_2}$ holds in $\Bir(X)$ for some $t_i \in \bbZ \setminus \{0\}$.
\end{thm}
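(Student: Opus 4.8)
The plan is to transport everything into the hyperbolic lattice $(\NS(X), q_X)$ of signature $(1, \rho(X)-1)$ and argue via the classification of lattice isometries, following Ratcliffe. Write $V = \NS_\bbR(X)$, let $\overline{\mathcal C} \subset V$ be the closure of the positive cone, and note that the group $\OO(\NS(X))$ of integral isometries is \emph{discrete} in $\OO(V) \cong \OO(1, \rho(X)-1)$, acting on the hyperbolic space $\mathbb{H}^{\rho(X)-1}$ whose boundary is the set of isotropic rays in $\overline{\mathcal C}$. Positive entropy of $g_i$ means precisely that $g_i^* \in \OO(\NS(X))$ is \emph{loxodromic}, with real eigenvalues $d_1(g_i) > 1$ and $d_1(g_i)^{-1}$ on a pair of isotropic eigen-lines, namely the attracting and repelling fixed points of $g_i^*$ on $\partial\mathbb{H}^{\rho(X)-1}$, and all remaining eigenvalues on the unit circle.

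First I would pin down $D$. Since $g_i^*$ is an isometry and $g_i^* D \equiv \lambda_i D$ with $\lambda_i > 0$, applying $q_X$ gives $(\lambda_i^2 - 1)\, q_X(D) = 0$. If $q_X(D) > 0$ then $\lambda_i = 1$, so $g_i^*$ fixes a positive (time-like) class; its orthogonal complement is negative definite, and $g_i^*$ restricted there is an integral isometry of a definite lattice, hence of finite order. This would make $g_i^*$ of finite order on $\NS(X)$, forcing $d_1(g_i) = 1$ and contradicting positive entropy. Hence $q_X(D) = 0$: the nonzero class $D$ is isotropic and lies in $\overline{\mathcal C}$ (this is where the movable/nef hypothesis enters), so $D$ spans one of the two isotropic eigen-lines of each $g_i^*$. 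Thus $[D]$ is a common fixed point at infinity of $g_1^*$ and $g_2^*$; after replacing $g_i$ by $g_i^{-1}$ if necessary — which only flips the sign of the eventual $t_i$ — I may assume $[D]$ is the attracting fixed point of both, i.e. $\lambda_i = d_1(g_i) > 1$.

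The crux is to promote the \emph{single} shared fixed point $[D]$ to a shared \emph{axis}. Here I would invoke the basic dichotomy for discrete groups of hyperbolic isometries (Ratcliffe): two loxodromic elements of a discrete subgroup of $\mathrm{Isom}(\mathbb{H}^{\rho(X)-1})$ sharing one fixed point at infinity must share both. Applied to the discrete group $\OO(\NS(X))$, this forces $g_1^*$ and $g_2^*$ to have a common repelling fixed point as well, hence a common geodesic axis $\gamma$. Consequently $\langle g_1^*, g_2^*\rangle$ fixes both endpoints of $\gamma$ and so lies inside the stabilizer $\operatorname{Stab}^+(\gamma) \cong \bbR \times \OO(\rho(X)-2)$ of the oriented axis, acting by screw motions (a translation along $\gamma$ of length proportional to $\log d_1(g_i)$, composed with a rotation in the normal directions). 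As a discrete subgroup of $\bbR \times \OO(\rho(X)-2)$ with the second factor compact, $\langle g_1^*, g_2^*\rangle$ is finite-by-cyclic, hence virtually infinite cyclic. A standard argument then gives nonzero integers $s_1, s_2$ with $(g_1^*)^{s_1} = (g_2^*)^{s_2}$: a fixed finite-index infinite cyclic subgroup $\langle c\rangle$ contains powers $(g_1^*)^{m_1} = c^{p}$ and $(g_2^*)^{m_2} = c^{q}$ with $p,q \neq 0$, whence $(g_1^*)^{m_1 q} = (g_2^*)^{m_2 p}$.

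Finally I would lift this equality from $\OO(\NS(X))$ to $\Bir(X)$. Set $u = g_1^{s_1}$ and $v = g_2^{s_2}$, so that $u^* = v^*$; then $\langle u, v\rangle$ surjects onto the infinite cyclic group $\langle u^*\rangle$ with kernel inside the \emph{finite} kernel of $\Bir(X) \to \OO(\NS(X))$ (finiteness being the standard fact that a birational automorphism of a hyperk\"ahler manifold acting trivially on $H^2(X,\bbZ)$ has finite order). Thus $\langle u, v\rangle$ is again virtually infinite cyclic, and the same common-power argument yields nonzero integers $a, b$ with $u^a = v^b$ in $\Bir(X)$, i.e. $g_1^{t_1} = g_2^{t_2}$ with $t_1 = s_1 a$ and $t_2 = s_2 b$. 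The main obstacle is the dichotomy step, which is exactly where discreteness of $\OO(\NS(X))$ is indispensable: for non-discrete groups two loxodromics may share only one endpoint and fail to be commensurable, reflecting that without integrality $d_1(g_1)$ and $d_1(g_2)$ need not be multiplicatively dependent.
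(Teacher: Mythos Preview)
Your argument is correct and, at the structural level, matches the paper's: reduce to the hyperbolic lattice $L=\NS(X)$, show that $\langle g_1^*,g_2^*\rangle\le\OO(L)'$ is virtually infinite cyclic, extract common powers on $L$, and then lift to $\Bir(X)$ using the finiteness of the kernel of $\Bir(X)\to\OO(L)$ together with the elementary group lemma about equal images modulo a finite normal subgroup.

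Where you genuinely diverge from the paper is in the lattice step. The paper (Proposition~\ref{Lattice} via Lemma~\ref{Lattice equiv}) never isolates a second common fixed point: it works directly with the homomorphism $\phi\colon g\mapsto\log\lambda_g$ attached to the single common eigenray $\bbR_{>0}D$, proves $\ker\phi=N(G)$, and shows $\phi(G)\subset\bbR$ is discrete by a Salem--polynomial counting argument (bounded integer coefficients), so $G/N(G)\cong\bbZ$; finiteness of $N(G)$ is then a separate step (Lemma~\ref{fin}) using the irrationality of Salem numbers. You instead invoke the geometric dichotomy from Ratcliffe that two loxodromic elements of a discrete group sharing one ideal fixed point must share both, obtain a common axis, and read off virtual cyclicity from the axis stabilizer $\bbR\times\OO(\rho(X)-2)$. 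Both routes are valid and both live in \cite{Rat}; yours is cleaner geometrically and handles the finiteness of the ``rotational'' part in one stroke, while the paper's route is more self-contained (no appeal to the dichotomy as a black box) and makes the arithmetic of dynamical degrees explicit.

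One small wording issue: in ruling out $q_X(D)>0$ you say $g_i^*$ restricts to an integral isometry of a definite \emph{lattice} $D^\perp$. Since $D$ is only an $\bbR$-divisor, $D^\perp$ need not be spanned by $D^\perp\cap L$. The conclusion you want still follows immediately---$g_i^*|_{D^\perp}$ lies in the compact orthogonal group of a negative-definite real form, so all eigenvalues of $g_i^*$ have modulus $1$ and $d_1(g_i)=1$---but the phrase ``definite lattice'' should be ``negative-definite real quadratic space''.
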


The first assertion below is due to \cite[Theorem 2.1 (2) and Proposition 2.9]{Oguiso07}.
Recall that a group $G$ is called {\it virtually solvable} (resp. {\it virtually unipotent} or {\it virtually abelian}),
if a finite-index subgroup of $G$ is solvable (resp. unipotent or abelian).
Note that (virtually) unipotent groups are defined only for linear algebraic groups. 
A group $G$ is called {\it almost infinite cyclic},
if a finite-index subgroup of $G$ is an infinite cyclic group.

\begin{proposition}\label{PropA}
Let $X$ be a smooth projective surface and let $G \le \Aut(X)$ be a group such that the induced
action $G | \NS(X)$ is an infinite group, and $G$ is of null entropy.
Then $G | \NS(X)$ is virtually abelian of rank $s \le \rho(X) - 2$.
Further, $s \le 18$, unless $X$ has Kodaira dimension $\kappa(X) = 1$.
\end{proposition}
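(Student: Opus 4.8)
The plan is to take the first assertion as given --- it is the hyperbolic-lattice statement of \cite[Theorem~2.1(2) and Proposition~2.9]{Oguiso07}, following \cite{Rat} --- and to upgrade the rank bound from $\rho(X)-2$ to $18$ by a case analysis in the Kodaira dimension $\kappa(X)$, carried out through an auxiliary genus-one fibration. First I would recall the geometric mechanism behind the first assertion, since it produces the object I need. The induced action $G\,|\,\NS(X)$ is by isometries of the hyperbolic lattice $(\NS(X),\ \cdot\ )$ of signature $(1,\rho(X)-1)$, and the null-entropy hypothesis says that no element is loxodromic (spectral radius $>1$) on the associated hyperbolic space $\mathbb{H}^{\rho(X)-1}$. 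An infinite group of isometries with no loxodromic element is parabolic: it fixes a unique isotropic ray $\mathbb{R}_{>0}e$ on $\partial\mathbb{H}^{\rho(X)-1}$, is virtually abelian, and its free abelian part embeds into the unipotent radical of the $\OO$-stabilizer of $e$, which acts faithfully on the negative-definite space $e^{\perp}/\mathbb{R}e$ of rank $\rho(X)-2$; this already gives $s\le\rho(X)-2$. The data I would carry forward are the distinguished nef class $e$ with $e^{2}=0$ and the realization of the free abelian part of $G$ as ``translations'' fixing $e$.

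Next I would convert $e$ into a fibration and split by $\kappa(X)$. Since $\Aut(X)$ is finite whenever $\kappa(X)=2$, that case is impossible here. Because $g^{*}K_X=K_X$ for every $g\in\Aut(X)$, the number $e\cdot K_X$ is $G$-invariant, and $e$ (nef with $e^{2}=0$) is a positive multiple of the fiber class of a fibration $f\colon X\to B$ preserved by a finite-index subgroup $G_0\le G$. The free abelian part of $G_0$ acts by translations fixing $e$, and realizing these geometrically forces the general fiber $F$ to carry a positive-dimensional translation group, hence $g(F)\le 1$. When $\kappa(X)\ge0$ the class $K_X$ is pseudo-effective, so $e\cdot K_X\ge0$, and adjunction $2g(F)-2=F\cdot K_X\ge0$ then pins down $g(F)=1$: after contracting the finitely many $(-1)$-curves in fibers, $f$ is a relatively minimal elliptic fibration, and the translations form, up to finite index, a subgroup of the Mordell--Weil group $\mathrm{MW}(f)$, whence $s\le\rank\mathrm{MW}(f)$.

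The refinement $s\le 18$ then separates into two regimes. If $\kappa(X)=0$, then $\rho(X)\le 20$ for every such surface ($K3$ with $\rho(X)\le 20$ being extremal, Enriques with $\rho(X)=10$, abelian and bielliptic with $\rho(X)\le 4$), so already the lattice bound gives $s\le\rho(X)-2\le 18$. If $\kappa(X)=-\infty$, then $X$ is rational or ruled and $\rho(X)$ may be arbitrarily large, so the bound $s\le\rho(X)-2$ is useless; here I would instead exploit the induced genus-one fibration, descend to its relatively minimal model (a rational elliptic surface), and invoke the classical bound $\rank\mathrm{MW}\le 8$, while checking that the genus-$0$ (conic-bundle) possibility contributes no infinite unipotent part. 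Thus $s\le 18$ whenever $\kappa(X)\ne 1$. The exception is genuine: for a properly elliptic surface the Iitaka elliptic fibration can have arbitrarily large Mordell--Weil rank (growing with the degree of the fundamental line bundle and the genus of the base via Shioda--Tate,
\[
\rank\mathrm{MW}(f)=\rho(X)-2-\sum_{v}(m_v-1),
\]
together with unbounded $\rho(X)$), so $s$ is unbounded when $\kappa(X)=1$.

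The step I expect to be the main obstacle is the rational case, precisely because $\rho(X)$ is unbounded there and the inequality $s\le\rho(X)-2$ carries no information. One must pass from $X$ to a relatively minimal model of the induced elliptic fibration and show that the parabolic part of $G$ descends, up to finite index, into the Mordell--Weil lattice of that minimal model, so that the universal bound $\rank\mathrm{MW}\le 8$ applies. A subsidiary technical point, needed before any of this, is to guarantee that the fixed isotropic class $e$ actually defines a morphism (semiampleness of $e$), which again rests on the classification of surfaces.
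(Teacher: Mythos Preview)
Your proposal is broadly correct but takes a more elaborate route than the paper. The paper never passes through Mordell--Weil theory: after recording the first assertion from \cite{Oguiso07}, it simply bounds $\rho$ on a suitable $G$-equivariant model and applies $s\le\rho-2$ once more. For $\kappa(X)=0$ it argues exactly as you do. For $\kappa(X)<0$ it splits as follows: on an irrational ruled surface, after $G$-equivariant contractions of $(-1)$-curves in fibers one reaches a relatively minimal model with $\rho=2$, and then both extremal rays of $\Nef$ are $G$-fixed, contradicting the uniqueness of the fixed isotropic ray (Lemma~\ref{Oguiso}); so this case simply does not occur. On a rational surface, one takes the pair $(X,G)$ minimal and invokes Gizatullin \cite[Theorem~2]{Gizatullin81} to get $K_X^2=0$, hence $\rho(X)=10$ and $s\le 8$. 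No fibration, no semiampleness of $e$, no identification of the parabolic part with $\mathrm{MW}(f)$ is needed.

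Your approach is defensible and ultimately reaches the same numerics (Shioda--Tate makes $s\le\rank\mathrm{MW}\le 8$ equivalent to $s\le\rho-2=8$ on a rational elliptic surface), but several steps need tightening. The sentence ``realizing these geometrically forces the general fiber $F$ to carry a positive-dimensional translation group'' conflates lattice-theoretic parabolic translations with geometric fiber translations; the group is discrete, and the passage from one to the other requires first showing that, after finite index, $G$ acts trivially on the base and then that the induced action on the generic fiber is by translations. Your handling of the irrational ruled (conic-bundle) case is also only gestured at; the paper's argument there, via $\rho=2$ and the uniqueness of the fixed ray, is short and decisive. Finally, you correctly flag the semiampleness of $e$ as a needed input --- but the paper's proof of the present proposition sidesteps it entirely by working with $\rho$ alone.
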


\begin{remark}
For a projective hyperk\"ahler manifold $X$ the same proof in \cite{Oguiso07} implies the first assertion in Proposition \ref{PropA}.
Further,
$$s \le \rho(X) - 2 \le \dim H^{1,1}(X, \bbC) - 2 \le b_2(X) - 4,$$ with $b_2(X)$ the second Betti number of $X$.
For a hyperk\"ahler manifold $X$ of dimension four, it is known that $b_2(X) \in \{3, \dots, 8, 23\}$ (cf.~\cite[Main Theorem]{Guan}).
\end{remark}

We refer to \cite{Cantat11}, \cite{Cantat14}, \cite{DF}, \cite{Fa} and \cite{Oguiso14}
for hard and deep results on birational actions on complex surfaces which are related
to Theorem \ref{Surface equiv}.

\par \vskip 1pc
\noindent
{\bf Acknowledgement.} This paper is finalized when
the last-named author visited KIAS, Seoul in December 2014.
He thanks the institute for the support and warm hospitality.
He is also supported by an ARF of NUS. The authors thank the referees for the
valuable suggestions and references \cite{Cantat14}, \cite{Fa}, \cite{Guan} and \cite{Rat}.

\section{Preliminaries}

Let $X$ be a smooth projective variety (resp. a compact K\"ahler manifold).
The {\it N\'eron-Severi group} $\NS(X)$ is defined as the group of line bundles modulo algebraic equivalence.
Denote its rank by $\rho(X)$, which is also called the {\it Picard number} of $X$.
The N\'eron-Severi space $\NS_{\bbR}(X) := \NS(X) \otimes_{\Z} \bbR$ is the vector space of numerical equivalence classes of $\bbR$-divisors.

Denote the group of all automorphisms (resp. biholomorphisms) of $X$ by $\Aut(X)$.
Denote the group of all birational maps (resp. bimeromorphisms) of $X$ by $\Bir(X)$.
By $\Aut_0(X)$ we mean the {\it identity connected component} of $\Aut(X)$.

\begin{setup}\label{cone}
{\bf K\"ahler cone, nef cone and movable cone}
\vskip 1pc

Let $X$ be a compact K\"ahler manifold. Set $H^{1,1}(X, \bbR):=H^{1,1}(X, \bbC)\cap H^2(X, \bbR)$.
The {\it K\"ahler cone} $\cK(X) \subseteq H^{1,1}(X, \bbR)$ is the open convex cone of all K\"ahler classes on $X$.
The closure of the K\"ahler cone $\cK(X)$ in $H^{1,1}(X, \bbR)$ is denoted by $\overline \cK(X)$.
The {\it nef cone} is defined as $\Nef(X) := \overline \cK(X) \cap \NS_{\bbR}(X)$.
An element $D$ in $\Nef(X)$ is called {\it nef}.

A divisor $D$ in $\NS(X)$ is {\it movable} if the linear system $|D|$ has no fixed component, i.e., the base locus of this linear system has codimension at least two.
The closure of all movable divisor classes in $\NS_{\R}(X)$ is called the {\it movable cone} of $X$
and denoted as $\Mov(X)$.
It is known that $$\Nef(X) \subseteq \Mov(X) \subseteq \Eff(X),$$ where the {\it pseudo-effective cone} $\Eff(X)$ of $X$ is the closure of all effective divisor classes in $\NS_{\bbR}(X)$.
\end{setup}

\begin{setup}\label{rlf}
{\bf Rational Lagrangian fibrations on hyperk\"ahler manifolds}
\vskip 1pc

A {\it hyperk\"ahler manifold} is a simply-connected compact K\"ahler manifold $X$ such that $H^0(X, \Omega_X^2)$ is generated by an everywhere non-degenerate holomorphic $2$-form $\omega$.
Note that $X$ is automatically of even complex dimensional (say $\dim X = 2n$).
A surjective (holo)morphism $\phi: X \to S$ to a normal variety $S$ is said to be a {\it Lagrangian fibration}
if a general fiber is connected and {\it Lagrangian} (i.e., the dimension of this fibre is $n$ and the restriction of the holomorphic $2$-form $\omega$ to this fibre is trivial).
A dominant meromorphic map $\phi: X \ratmap S$ to a normal variety $S$ is said to be a {\it rational Lagrangian fibration} if there exists a bimeromorphic map $\tau: X \ratmap X'$
to another hyperk\"ahler manifold $X'$
(which is necessarily isomorphic in codimension one)
such that the composite map $\phi\circ \tau^{-1}: X' \to S$ is a Lagrangian fibration.
Thus $\phi^*$ is well-defined on the N\'eron-Severi group $\NS(S)$ of $S$.
\end{setup}

\begin{setup}
{\bf Positive cone and birational K\"ahler cone}
\vskip 1pc

Let $X$ be a compact hyperk\"ahler manifold.
The {\it positive cone} $\cC(X)$ in $H^{1,1}(X, \bbR)$ is the connected component
of the open cone $$\{\alpha\in H^{1,1}(X, \bbR) :  q_X(\alpha)>0\}$$
that contains the K\"ahler cone $\cK(X)$, where $q_X$ is the Beauville-Bogomolov quadratic form.
The closure of $\cC(X)$ in $H^{1,1}(X, \bbR)$ is denoted by $\overline \cC(X)$.

If $f:X'\ratmap X$ is a bimeromorphic map between two hyperk\"ahler manifolds,
then it is isomorphic in codimension one.
Hence the pullback of $f$ is well defined on $H^2(X, \bbC)$ and compatible with its Hodge structure,
the Beauville-Bogomolov quadratic form $q_X$ and the birational K\"ahler cone $\BK(X)$
(cf. \cite[Propositions 21.6 and 25.14]{Huybrechts03}).
Recall that the {\it birational K\"ahler cone} $\BK(X)$ of $X$ is defined as
$$\BK(X):=\bigcup_{\tau:X\ratmap X'} \tau^*\cK(X'),$$
where $\tau:X\ratmap X'$ runs through all bimeromorphic maps from $X$ to another hyperk\"ahler manifold $X'$.
Note that $\BK(X)$ is in general not a cone in $\cC(X)$, but its closure $\overline \BK(X)$ in $H^{1,1}(X, \bbR)$ is a closed convex cone contained in $\overline \cC(X)$.

There is a geometric characterization of the birational K\"ahler cone,
which states that $\alpha \in \overline\BK(X)$ if and only if $\alpha \in \overline \cC(X)$
and $q_X(\alpha,[D]) \ge 0$ for all uniruled prime divisors $D \subset X$ (cf. \cite[Proposition 28.7]{Huybrechts03}).
Furthermore, we have $$\Mov(X) = \overline\BK(X) \cap \NS_{\bbR}(X).$$
\end{setup}

The following two lemmas on group theory are useful in the future.

\begin{lemma} \label{Group}
~\begin{enumerate}
\item[(1)]
Let $G$ be a group, $H \lhd G$ a finite normal subgroup and $g_1,\, g_2 \in G$. Suppose that $\bar{g}_1 = \bar{g}_2$ in $G/H$.
Then there exists a positive integer $s$ such that $g_1^s = g_2^s$.
\item[(2)]
A group $G$ is almost infinite cyclic if and only if there is a finite-index subgroup $G_1$ of $G$
such that $G_1/G_2$ is infinite cyclic for some finite $G_2 \lhd G_1$.
\end{enumerate}
\end{lemma}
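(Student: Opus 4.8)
The plan is to exploit a single elementary fact that drives both parts: for a \emph{finite} group the automorphism group is finite, so a sufficiently high power of any element centralizes a finite normal subgroup.

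For part (1), I would first translate the hypothesis $\bar g_1 = \bar g_2$ in $G/H$ into the statement that $g_1^s$ and $g_2^s$ lie in the same coset of $H$ for every $s$; concretely $g_1^s = g_2^s h_s$ with $h_s := g_2^{-s} g_1^s \in H$. The obstacle is that I cannot simply factor $g_1^s = (g_2 h_1)^s$ as $g_2^s h_1^s$, since $g_2$ and $h_1$ need not commute. To get around this, consider the conjugation homomorphism $c : G \to \Aut(H)$, $c(g)(x) = g x g^{-1}$, which is well defined because $H \lhd G$; since $\Aut(H)$ is finite there is an $r \ge 1$ with $c(g_2)^r = \mathrm{id}$, i.e.\ $g_2^r$ commutes with every element of $H$. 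Now $g_1^r = g_2^r h_r$ with $g_2^r$ centralizing $h_r \in H$, so the powers telescope cleanly: $g_1^{rk} = g_2^{rk} h_r^{k}$ for all $k$. Taking $k$ to be the (finite) order of $h_r$ in $H$ gives $h_r^k = e$ and hence $g_1^{s} = g_2^{s}$ with $s = rk$.

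For part (2), the forward implication is immediate: if $G$ has a finite-index subgroup $G_1 \cong \bbZ$, take $G_2 = \{e\}$, which is a finite normal subgroup of $G_1$ with $G_1/G_2$ infinite cyclic. The substance is the converse, which amounts to showing that an extension of $\bbZ$ by a finite group is virtually infinite cyclic. Given $G_1$ of finite index in $G$ and $G_2 \lhd G_1$ finite with $G_1/G_2 \cong \bbZ$, I would pick $g \in G_1$ whose image generates $\bbZ$. Applying the same centralizing trick to the finite normal subgroup $G_2$, there is $m \ge 1$ with $g^m \in C_{G_1}(G_2)$. Since the image of $g^m$ in $\bbZ$ is $m \neq 0$, the element $g^m$ has infinite order and $\langle g^m \rangle \cong \bbZ$, with $\langle g^m \rangle \cap G_2 = \{e\}$. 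Because $g^m$ centralizes $G_2$, the subgroup $\langle g^m\rangle G_2$ is isomorphic to $\langle g^m \rangle \times G_2$ and has index $m$ in $G_1$; hence $\langle g^m \rangle$ has index $m\,|G_2|$ in $G_1$ and finite index in $G$. This exhibits an infinite cyclic subgroup of finite index, so $G$ is almost infinite cyclic.

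The only genuine difficulty in either part is the non-commutativity just described; once it is defused by replacing $g_2$ (resp.\ $g$) by a power that centralizes the finite normal subgroup, everything reduces to finite-order bookkeeping. I expect no subtleties beyond checking that the relevant indices and orders are finite, which follows from the finiteness of $H$, of $G_2$, and of their automorphism groups.
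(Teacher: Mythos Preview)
Your proof is correct, but for part~(1) it differs from the paper's approach. The paper uses a pigeonhole argument: since $g_1^n g_2^{-n} \in H$ for all $n$ and $H$ is finite, two of these elements coincide, say $g_1^m g_2^{-m} = g_1^n g_2^{-n}$ with $m < n$, and cancelling gives $g_1^{n-m} = g_2^{n-m}$ directly. This avoids any mention of $\Aut(H)$ or centralizers and is a line shorter. Your route, by contrast, passes to a power $g_2^r$ that centralizes $H$ so that the product $g_1^r = g_2^r h_r$ telescopes; this is slightly more machinery but has the advantage that the same idea powers your part~(2), giving a unified treatment. For part~(2) the paper simply cites \cite[Lemma~2.4]{CWZ14}, so your explicit argument --- locating an infinite cyclic $\langle g^m\rangle$ of finite index by the same centralizing trick --- actually supplies what the paper omits.
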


\begin{proof}
For part (1), by the assumption, $g_1^n g_2^{-n} \in H$ for all $n \in \bbZ$. Since $H$ is finite, there exist $m < n$ such that $g_1^m g_2^{-m} = g_1^n g_2^{-n}$.
Then $g_1^s = g_2^s$ for $s := n-m$.

Part (2) is easy; see \cite[Lemma 2.4]{CWZ14}.
\end{proof}

Note that the lemma below or \cite[Lemma 2.7]{CWZ14} has been generalized in \cite[Lemma 5.5]{Di} without assuming $G$ to be a subgroup of an algebraic group.
But the following simple form with simple proof as in \cite[Lemma 2.7]{CWZ14} is enough for us.

\begin{lemma}\label{ext}
Let $G$ be a subgroup of an algebraic group $\hat{G}$ (which is an extension of an abelian variety by a linear algebraic group).
Consider the exact sequence
$$ 1 \to N \to G \to Q \to 1.$$
Suppose that both $N$ and $Q$ are virtually solvable. Then so is $G$.
\end{lemma}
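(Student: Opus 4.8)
The plan is to translate the statement into a fact about identity components of Zariski closures, where the hypothesis $G \le \hat{G}$ becomes essential. The key tool is the following criterion: for a subgroup $S$ of an algebraic group, with Zariski closure $\overline{S}$, one has that $S$ is virtually solvable if and only if the identity component $\overline{S}^0$ is a solvable algebraic group. Indeed, if $S_1 \le S$ is solvable of finite index, then $\overline{S_1}$ is solvable (the derived series passes to closures) and of finite index in $\overline{S}$ (a finite union of cosets closes up to a finite union of cosets), so $\overline{S}^0 = \overline{S_1}^0 \le \overline{S_1}$ is solvable; conversely $S \cap \overline{S}^0$ has finite index in $S$ and is solvable. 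Thus it suffices to prove that $\overline{G}^0$ is solvable, where $\overline{G}$ is the Zariski closure of $G$ in $\hat{G}$.

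First I would feed in the two hypotheses through this criterion. Since $N \lhd G$, its closure $\overline{N}$ is normal in $\overline{G}$, and the criterion applied to $N$ gives that $\overline{N}^0$ is solvable. Next, $\overline{G}/\overline{N}$ is again an algebraic group, and the image of $G$ in it is both Zariski dense and a quotient of $G/N = Q$, hence virtually solvable; applying the criterion once more yields that $(\overline{G}/\overline{N})^0$ is solvable. It then remains to glue. I would set $K := \overline{G}^0 \cap \overline{N}$, a closed normal subgroup of $\overline{G}^0$. Its identity component satisfies $K^0 \le \overline{N}^0$, hence is solvable, while $\overline{G}^0/K \cong q(\overline{G}^0)$ is a connected subgroup of $(\overline{G}/\overline{N})^0$, hence solvable. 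Because $\overline{G}^0$ is connected, the finite normal subgroup $K/K^0$ is central in $\overline{G}^0/K^0$, so $\overline{G}^0/K^0$ is a central, hence solvable, extension of $\overline{G}^0/K$. Combined with the solvability of $K^0$, this shows $\overline{G}^0$ is solvable, and the criterion finishes the proof.

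The step where the algebraic hypothesis is indispensable — and the one I expect to be the main obstacle — is precisely this gluing. For abstract groups the class of virtually solvable groups is \emph{not} closed under extensions, so there is no purely group-theoretic argument; what rescues the statement is that, after passing to Zariski closures, one is manipulating connected algebraic groups, for which solvability is stable under extension and every finite normal subgroup is automatically central. The only genuine care is in controlling the finite component groups $K/K^0$ and $\overline{G}/\overline{G}^0$. An alternative route would use the Tits alternative: one shows $G$ contains no non-abelian free subgroup (if $F \le G$ were free of rank $\ge 2$, then $F/(F\cap N)$ embeds in $Q$ and is virtually solvable, forcing a term of the derived series of a finite-index subgroup of $F$ into $N$, i.e.\ a non-abelian free subgroup inside the virtually solvable group $N$, which is absurd), and then invokes the Tits alternative for subgroups of $\hat{G}$. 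However, this merely relocates the same difficulty into establishing the Tits alternative for the non-linear group $\hat{G}$, so I would favour the self-contained closure argument above.
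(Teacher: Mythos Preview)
Your argument is correct and follows essentially the same line as the paper: the paper's proof merely observes that the Zariski closure of $N$ in $\hat{G}$ has only finitely many connected components and then defers to \cite{CWZ14}, while you spell out in full the passage to Zariski closures, the criterion that virtual solvability of $S$ is equivalent to solvability of $\overline{S}^0$, and the gluing via the finite (hence central) component group $K/K^0$. Your write-up is thus a detailed realization of exactly the sketch the paper gives.
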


\begin{proof}
This is implicitly proved in \cite[Lemma 2.7]{CWZ14}.
We just use the fact that the Zariski-closure of $N$ in $\hat{G}$ has only finitely many connected components.
Then the same argument in \cite{CWZ14} gives the proof.
\end{proof}

At the end of this section we quote the following lemmas which will be used later.

\begin{lemma}[{cf. \cite[Theorem 4.8]{Fujiki78}, \cite[Proposition 2.2]{Lieberman78}, or \cite[Lemma 2.6]{CWZ14}}] \label{Finite res}
Let $X$ be a smooth projective variety (resp. a compact K\"ahler manifold),
and $L := \NS(X)/(\torsion)$ (resp. $H^2(X, \bbZ)/(\torsion)$).
Then a group $G \le \Aut(X)$ has finite induced action $G | L$ if and only if the index $|G : G \cap \Aut_0(X)|$ is finite.
\end{lemma}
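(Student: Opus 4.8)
The plan is to reduce the statement to the finiteness theorem of Fujiki and Lieberman governing how $\Aut_0(X)$ sits inside $\Aut(X)$, and to finish with a short index computation. First I would record the elementary observation that $\Aut_0(X)$ acts trivially on $L$. Indeed $\Aut(X)$ is a complex Lie group with at most countably many connected components, and its action on the discrete lattice $L$ is given by a continuous (in fact real-analytic) homomorphism $\Aut(X) \to \GL(L)$; being a continuous map from a connected group into a discrete group and sending the identity to the identity, its restriction to the identity component $\Aut_0(X)$ is constant, hence trivial. Consequently the induced action $G | L$ always factors through the quotient $G/(G \cap \Aut_0(X))$. This immediately yields the easy implication: if $|G : G \cap \Aut_0(X)|$ is finite, then $G | L$ is a quotient of a finite group, hence finite.

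For the converse I would isolate the deep input as follows. Write $\Aut^*(X) := \ker(\Aut(X) \to \GL(L))$ for the subgroup acting trivially on $L$. The key fact is that the index $|\Aut^*(X) : \Aut_0(X)|$ is finite, and this is exactly where the compactness and K\"ahlerness of $X$ enter. I would deduce it from Lieberman's theorem that for a fixed K\"ahler (resp.\ ample, in the projective case) class $\omega$ the stabilizer $\Aut_\omega(X) := \{g \in \Aut(X) : g^*\omega = \omega\}$ satisfies $|\Aut_\omega(X) : \Aut_0(X)| < \infty$. Since any $g \in \Aut^*(X)$ fixes every class of $L$ and in particular fixes $\omega$, one has the chain $\Aut_0(X) \subseteq \Aut^*(X) \subseteq \Aut_\omega(X)$, whence $|\Aut^*(X) : \Aut_0(X)| \le |\Aut_\omega(X) : \Aut_0(X)| < \infty$.

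Granting this, suppose $G | L$ is finite and set $K := \ker(G \to \GL(L)) = G \cap \Aut^*(X)$, which has finite index in $G$. Because $\Aut_0(X) \subseteq \Aut^*(X)$ we have $K \cap \Aut_0(X) = G \cap \Aut_0(X)$, and the inclusion $K \le \Aut^*(X)$ together with the normality of $\Aut_0(X)$ in $\Aut(X)$ gives an injection $K/(K \cap \Aut_0(X)) \injmap \Aut^*(X)/\Aut_0(X)$, so that $|K : G \cap \Aut_0(X)|$ is bounded by the finite index $|\Aut^*(X) : \Aut_0(X)|$. Multiplying the two finite indices,
$$|G : G \cap \Aut_0(X)| = |G : K| \cdot |K : G \cap \Aut_0(X)| < \infty,$$
which completes the forward direction.

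The only genuine obstacle here is the Fujiki--Lieberman finiteness statement $|\Aut^*(X) : \Aut_0(X)| < \infty$, which is precisely the content of the cited references; everything else is formal bookkeeping with indices. Accordingly, I expect the paper's proof to simply invoke that citation and carry out the elementary group-theoretic argument above.
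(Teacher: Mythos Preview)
Your proposal is correct. The paper does not actually supply a proof of this lemma: it merely states the result and cites Fujiki, Lieberman, and \cite[Lemma 2.6]{CWZ14} for it. Your argument correctly isolates the Fujiki--Lieberman finiteness theorem $|\Aut_\omega(X) : \Aut_0(X)| < \infty$ as the only nontrivial input and carries out the remaining index bookkeeping; this is precisely how the cited reference \cite[Lemma 2.6]{CWZ14} proceeds as well. One small point worth making explicit for the K\"ahler case: when $L = H^2(X,\bbZ)/(\torsion)$, a K\"ahler class $\omega$ need not be integral, but since $g \in \Aut^*(X)$ acts trivially on $L$ it acts trivially on $L \otimes_{\bbZ} \bbR = H^2(X,\bbR)$ and hence fixes $\omega$ anyway, so your chain $\Aut_0(X) \subseteq \Aut^*(X) \subseteq \Aut_\omega(X)$ is justified in both settings.
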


\begin{lemma}[{cf. \cite[Corollary 2.7]{Oguiso08-JDG}, or \cite[\S 9]{Huybrechts99}}
for the case of the action of $G$ on $H^2(X, \bbZ)$] \label{finite kernel}
Let $X$ be a projective hyperk\"ahler manifold and let $G \le \Bir(X)$.
Then the homomorphism
$r_{\NS}: G \rightarrow \OO(L)\textrm{ (cf. {\rm \ref{hyper_l}})},\ g \mapsto g^* | L$ has finite kernel, where $L:=\NS(X)$.
\end{lemma}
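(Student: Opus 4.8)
The plan is to leverage the larger lattice $H^2(X,\bbZ)$, for which finiteness of the kernel is already granted by the cited results, and to show that passing from $H^2(X,\bbZ)$ down to $L=\NS(X)$ only enlarges the kernel by a finite amount. Write $K:=\ker r_{\NS}=\{g\in G:g^*|\NS(X)=\id\}$; the goal is that $K$ is finite. Restricting the $H^2$-representation to $K$ yields a homomorphism $\kappa\colon K\to\OO(H^2(X,\bbZ))$, $g\mapsto g^*|H^2(X,\bbZ)$; here one uses that a birational $g$ is an isomorphism in codimension one, so that $g^*$ acts on $H^2(X,\bbZ)$ preserving both $q_X$ and the Hodge structure. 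The kernel of $\kappa$ is $K\cap\ker\bigl(G\to\OO(H^2(X,\bbZ))\bigr)$, which is finite by \cite[Corollary 2.7]{Oguiso08-JDG} (or \cite[\S 9]{Huybrechts99}). Hence it suffices to prove that the image $\kappa(K)$ is finite.

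To do this I would examine the real Hodge structure on $H^2(X,\bbR)$. Since $X$ is projective, $q_X$ is nondegenerate on $\NS_{\bbR}(X)$ with signature $(1,\rho(X)-1)$; and because $h^{2,0}(X)=1$, the real $2$-plane $P:=(H^{2,0}(X)\oplus H^{0,2}(X))\cap H^2(X,\bbR)=\langle\operatorname{Re}\omega,\operatorname{Im}\omega\rangle$ is $q_X$-positive-definite, as follows from $q_X(\omega,\omega)=0$ together with $q_X(\omega,\bar\omega)>0$. This gives a $q_X$-orthogonal decomposition $H^2(X,\bbR)=P\oplus\NS_{\bbR}(X)\oplus T^{1,1}$, where $T^{1,1}:=\NS_{\bbR}(X)^{\perp}\cap H^{1,1}(X,\bbR)$ is negative definite: its signature is $(0,h^{1,1}(X)-\rho(X))$, the unique positive direction of $H^{1,1}(X,\bbR)$ already lying inside $\NS_{\bbR}(X)$.

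Now take any $\phi=\kappa(g)\in\kappa(K)$. It is a Hodge isometry acting as the identity on $\NS_{\bbR}(X)$, so it preserves $P$ (it respects the Hodge type) and $T^{1,1}$ (it preserves both $\NS_{\bbR}(X)$ and $H^{1,1}(X,\bbR)$). Therefore $\phi$ lies in $\OO(P)\times\{\id\}\times\OO(T^{1,1})=:C$, and $C$ is compact because $q_X$ is definite on each of $P$ and $T^{1,1}$. On the other hand $\phi$ preserves the lattice $H^2(X,\bbZ)$, so $\kappa(K)\subseteq\OO(H^2(X,\bbZ))\cap C$; as $\OO(H^2(X,\bbZ))$ is discrete in $\OO(H^2(X,\bbR))$ and $C$ is compact, this intersection is finite. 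Thus $\kappa(K)$ is finite, and since $\ker\kappa$ is finite as well, $K$ is finite, proving the lemma.

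The crux, and the step I expect to require the most care, is the observation that triviality on $\NS(X)$ — a priori much weaker than triviality on all of $H^2(X,\bbZ)$, so that $K$ could genuinely exceed the known-finite $H^2$-kernel — nonetheless confines the remaining (transcendental) action to the definite subspaces $P$ and $T^{1,1}$, forcing it into a compact group. This is precisely where projectivity (nondegeneracy of $q_X|\NS_{\bbR}(X)$) and the hyperk\"ahler condition $h^{2,0}=1$ enter, and it is what rules out the positive-entropy-type behaviour on the transcendental lattice that would otherwise make $\kappa(K)$ infinite.
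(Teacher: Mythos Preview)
Your argument is correct. The paper itself does not supply a proof of this lemma: it simply records the statement and cites \cite[Corollary 2.7]{Oguiso08-JDG} and \cite[\S 9]{Huybrechts99}, the latter reference being explicitly flagged as covering only the action on $H^2(X,\bbZ)$. What you have written is exactly the standard reduction from the $H^2$-statement to the $\NS$-statement, and it is the argument one expects the authors to have in mind: use the cited finiteness of $\ker\bigl(G\to\OO(H^2(X,\bbZ))\bigr)$, then show $\kappa(K)$ is finite by trapping it in the compact group $\OO(P)\times\{\id\}\times\OO(T^{1,1})$ intersected with the discrete $\OO(H^2(X,\bbZ))$.

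All the ingredients you invoke are available in the paper's Section~2: that a bimeromorphic self-map of a hyperk\"ahler manifold is an isomorphism in codimension one and that $g^*$ on $H^2$ respects the Hodge structure and $q_X$ (stated around \cite[Propositions 21.6 and 25.14]{Huybrechts03}); that $q_X$ has signature $(3,b_2-3)$ on $H^2(X,\bbZ)$ and $(1,\rho(X)-1)$ on $\NS(X)$ when $X$ is projective (Remark~\ref{ns_lattice}). Your signature count for $T^{1,1}$ and the positive-definiteness of $P$ are correct, and the discreteness/compactness conclusion is routine. So there is no gap; your proof simply makes explicit what the paper leaves to the references.
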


\begin{lemma}[{cf. \cite[Application 1.4]{FZ13}}] \label{irrat}
Let $X$ be a compact K\"ahler surface with first
Betti number $b_1(X) = 0$. Suppose that $\Aut_0(X) \ne 1$. Then $|\Aut(X) : \Aut_0(X)| < \infty$ and $X$ is projective. In particular, $\Aut(X)$ is of null entropy.
\end{lemma}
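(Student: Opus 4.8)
The plan is to split the argument into three stages: determine the structure of $\Aut_0(X)$ from $b_1(X)=0$, deduce projectivity, and then bound the induced action on $\NS(X)$, from which finiteness of the index and the null-entropy statement follow formally.

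First, I would use $b_1(X)=0$ to pin down $\Aut_0(X)$. Since $X$ is Kähler, $h^{1,0}(X)=\tfrac12 b_1(X)=0$, so $H^0(X,\Omega^1_X)=0$ and $\Alb(X)=0$. By the Fujiki--Lieberman description of automorphism groups of compact Kähler manifolds (cf.\ \cite{Fujiki78,Lieberman78}), $\Aut_0(X)$ is an extension of a subtorus of $\Alb(X)$, acting by translations through the Albanese map, by a connected linear algebraic group; as $\Alb(X)=0$ the torus part is trivial, so $\Aut_0(X)$ is itself a connected linear algebraic group, positive-dimensional by assumption.

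Next, for projectivity: a positive-dimensional connected linear algebraic group over $\bbC$ contains a subgroup $H\cong\mathbb{G}_a$ or $\mathbb{G}_m$ acting nontrivially on $X$, and the general $H$-orbit is a one-dimensional homogeneous space of a linear group, hence a rational curve. These orbits cover a dense open subset, so $X$ is uniruled; a uniruled compact Kähler surface is projective, and $b_1(X)=0$ forces the base of the ruling to be $\bbP^1$, so $X$ is a rational surface.

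Finally, by Lemma \ref{Finite res} it suffices to show that $\Aut(X)\,|\,\NS(X)$ is finite. I would form the rational quotient $\pi\colon X\ratmap B$ of $X$ by the connected group $\Aut_0(X)$: its general fibres are the closures of the general $\Aut_0$-orbits, which are rational of genus zero, and $b_1(X)=0$ gives $B\cong\bbP^1$. (If instead $\Aut_0$ has a dense orbit, $X$ is almost homogeneous under a linear algebraic group and is one of the classically classified rational surfaces---$\bbP^2$, $\bbP^1\times\bbP^1$, a Hirzebruch surface, or a mild blow-up---for which the statement is immediate.) Because $\Aut_0(X)$ is normal in $\Aut(X)$, every $\phi\in\Aut(X)$ permutes $\Aut_0$-orbits and therefore preserves $\pi$, descending to an automorphism of $B$; after an equivariant resolution of the $\Aut(X)$-invariant base locus of $\pi$ we obtain a genus-zero fibration $\tilde\pi\colon\tilde X\to\bbP^1$ preserved by the lifted $\Aut(X)$-action. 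The main obstacle---and the crux of the proof---is the input that a group preserving a genus-zero fibration acts on the Néron--Severi lattice through a finite group: the fibre class is fixed, $\NS(\tilde X)$ is generated by it together with a section and the finitely many components of the (finitely many) reducible fibres, the connected fibrewise and base automorphisms act trivially, and the remaining action is by permutations of a finite configuration. Granting this, $\Aut(X)\,|\,\NS(\tilde X)$ is finite, hence so is $\Aut(X)\,|\,\NS(X)$ via the equivariant pullback $\NS(X)\hookrightarrow\NS(\tilde X)$, and Lemma \ref{Finite res} yields $|\Aut(X):\Aut_0(X)|<\infty$. For the last assertion, any $g\in\Aut(X)$ has a power $g^k\in\Aut_0(X)$ acting trivially on the discrete lattice $\NS(X)$, so the eigenvalues of $g^*|\NS_{\bbC}(X)$ are roots of unity and $\rho(g)=d_1(g)=1$; thus $\Aut(X)$ is of null entropy.
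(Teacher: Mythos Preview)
The paper does not supply its own proof of this lemma; it is quoted directly from \cite[Application 1.4]{FZ13}. So there is nothing to compare against line by line, and your proposal has to stand on its own.

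Your Stages 1 and 2 are correct and standard: the Fujiki--Lieberman structure theorem plus $b_1(X)=0$ forces $\Aut_0(X)$ to be linear, and a nontrivial linear action makes the K\"ahler surface $X$ uniruled, hence projective and (since $q(X)=0$) rational.

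Stage 3 is where the real content lies, and your sketch has two soft spots. First, the almost-homogeneous case is not as narrow as your parenthetical suggests: almost-homogeneous rational surfaces under a linear group include all projective toric surfaces, with arbitrarily large Picard number, so ``immediate'' is too quick. The clean uniform argument is that a dense $\Aut_0$-orbit forces $-K_X$ big (this is exactly \cite[Theorem 1.2]{FZ13}), hence $K_X^2>0$; since $\Aut(X)$ fixes the class $K_X$ of positive square, $K_X^\perp$ is negative-definite and $\Aut(X)\,|\,\NS(X)$ embeds into the finite group $\OO(K_X^\perp)$.

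Second, your key ``input''---that automorphisms preserving a genus-zero fibration on a rational surface act on $\NS$ through a finite group---is correct, but your heuristic does not explain why the \emph{section} class is controlled; this is exactly where a unipotent contribution could hide. The missing computation is short: the fibre class $F$ is primitive and isotropic, so $F^\perp/\langle F\rangle$ is negative-definite with finite isometry group; hence a finite-index subgroup $G_1$ acts trivially on $F^\perp/\langle F\rangle$. For $\phi\in G_1$ and a fibre component $C$, one gets $\phi^*[C]=[C]+aF$, and comparing $h^0$ (the right side moves in a pencil if $a>0$, the left is rigid since $C^2<0$) forces $a=0$. Thus $G_1$ fixes $F^\perp$ pointwise, so for a section class $S$ one has $\phi^*S-S\in(F^\perp)^\perp=\bbZ F$, and $(\phi^*S)^2=S^2$ then gives $\phi^*S=S$. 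With this, your Stage 3 goes through and the rest of your argument (Lemma~\ref{Finite res} and the null-entropy conclusion) is fine.
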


\section{Automorphisms of hyperbolic lattices and applications}

\begin{setup}\label{hyper_l}
{\bf Hyperbolic lattices and their symmetries}
\vskip 1pc

By a lattice $L$, we mean a free abelian group $L \cong \bbZ^{\oplus r}$
admitting a non-degenerate symmetric bilinear form $L\times L\to\bbZ$, denoted as $(\,\cdot\,,\,\cdot\,)$.
The signature of $L$ is defined as the signature of $L_\bbR := L\otimes_\bbZ \bbR$.
The lattice $L$ is called {\it hyperbolic}
if the signature of $L$ is $(1, r-1)$ .

The {\it positive cone} $\cC(L_{\bbR})$ is one of the two connected components of $$\{x\in L_\bbR : (x,x)>0\}.$$
Denote the boundary (resp. closure) of $\cC(L_{\bbR})$ by $\partial\cC(L_{\bbR})$ (resp. $\overline{\cC}(L_{\bbR})$).
Note that there is no ambiguity when applied to surfaces or hyperk\"ahler manifolds.
In fact, the positive cone is always chosen by us so that it contains an ample class or a K\"ahler class.

We denote the {\it group of isometries} of $L$ by
$$\OO(L):=\{g\in \Aut(L) : (gx,gy)=(x,y), \, \forall x,y\in L\}.$$
The subgroup $\OO(L)'$ of $\OO(L)$ which preserves the positive cone has index two.
\end{setup}

\begin{remark} \label{ns_lattice}
When $X$ is a smooth projective surface (resp. a projective hyperk\"ahler manifold), the intersection form (resp. the Beauville-Bogomolov quadratic form $q_X$) on $X$
gives a non-degenerate symmetric bilinear form on the lattice $L:=\NS(X)/(\torsion)$
(resp. $L:=\NS(X)$; note that the N\'eron-Severi group is torsion free since $X$ is simply connected).
In either case, $L$ is a hyperbolic lattice of signature $(1, \rho(X) - 1)$ (cf. \cite[Proposition 26.13]{Huybrechts03}).
The above $L$ is called the {\it N\'eron-Severi lattice} of $X$.
\end{remark}

\begin{setup}
{\bf Spectral radius and entropy}
\vskip 1pc

Let $k = \bbZ$ or a subfield of $\bbC$, $V$ a finite $k$-module,
and $\varphi : V \to V$ a $k$-linear endomorphism.
Set $V_{\bbC} := V \otimes_k \bbC$.
Use the same $\varphi$ to denote its extension to a $\bbC$-linear
endomorphism $\varphi : V_{\bbC} \to V_{\bbC}$.
Define the {\it spectral radius} of $\varphi$ as follows
$$\rho(\varphi) := \max \{|\lambda| : \lambda \in \bbC \text{ is an eigenvalue of }
\varphi : V_{\bbC} \to V_{\bbC} \} .$$
Define the {\it entropy} of $\varphi$ as
$h(\varphi) := \log \rho(\varphi)$.

Let $G \le \OO(L)$ for some lattice $L$. We define the {\it null-subset} of $G$ as
$$N(G) :=\{g\in G : g \text{ is of null entropy, i.e., } h(g)=0, \text{ or equivalently } \rho(g) = 1 \}.$$
\end{setup}

The following generalized Perron-Frobenius theorem is due to Birkhoff.

\begin{lemma}[{cf. \cite{Birkhoff67}}] \label{Bir}
Let $C$ be a strictly (i.e., salient) convex closed cone of a finite-dimensional
$\bbR$-vector space $V$ such that $C$ spans $V$ as a vector space.
Let $g : V \to V$ be a $\bbR$-linear endomorphism such that $g(C) \subseteq C$.
Then the spectral radius $\rho(g)$ is an eigenvalue of $g$ and
there is an eigenvector $v_g \in C$ corresponding to the eigenvalue $\rho(g)$.
\end{lemma}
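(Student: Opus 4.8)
The plan is to reduce the statement to the case of a strictly positive operator, handle that case by a Brouwer fixed point argument that pins the relevant eigenvalue to the spectral radius, and then recover the general case by a limiting procedure. First I would exploit that $C$ is a proper cone, i.e.\ closed, salient, and spanning $V$. Because $C$ is salient and solid, its dual cone $C^{*} := \{\ell \in V^{*} : \ell(x) \ge 0 \text{ for all } x \in C\}$ is also salient and solid, so I may pick $\ell$ in the interior of $C^{*}$; then $\ell(x) > 0$ for every $x \in C \setminus \{0\}$. Set $B := \{x \in C : \ell(x) = 1\}$. Strict positivity of $\ell$ on $C\setminus\{0\}$ forces $B$ to be bounded, hence $B$ is a nonempty compact convex base of $C$. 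Finally fix an interior point $u \in \operatorname{int}(C)$; it is an order unit, so its Minkowski gauge $\|x\|_{u} := \inf\{t > 0 : tu - x \in C \text{ and } tu + x \in C\}$ is a genuine norm on $V$, where one uses that $C$ is closed and salient.

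Next, for $\epsilon > 0$ I would perturb $g$ to $g_{\epsilon}(x) := g(x) + \epsilon\,\ell(x)\,u$. For $x \in C\setminus\{0\}$ we have $g(x) \in C$ and $\epsilon\,\ell(x)\,u \in \operatorname{int}(C)$, so $g_{\epsilon}(x) \in \operatorname{int}(C)$ because $C + \operatorname{int}(C) \subseteq \operatorname{int}(C)$; thus $g_{\epsilon}$ is strictly positive. The normalized map $T_{\epsilon}(x) := g_{\epsilon}(x)/\ell(g_{\epsilon}(x))$ is then continuous and sends the compact convex set $B$ into itself, so Brouwer's theorem yields a fixed point $v_{\epsilon} \in B$. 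This says $g_{\epsilon}(v_{\epsilon}) = \lambda_{\epsilon}\, v_{\epsilon}$ with $\lambda_{\epsilon} = \ell(g_{\epsilon}(v_{\epsilon})) > 0$, and moreover $v_{\epsilon} = \lambda_{\epsilon}^{-1} g_{\epsilon}(v_{\epsilon}) \in \operatorname{int}(C)$.

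The crucial point is to identify $\lambda_{\epsilon}$ with $\rho(g_{\epsilon})$. Since $v_{\epsilon}$ is interior it is itself an order unit, and for any positive operator $P$ and any order unit $v$ one checks the identity $\|P\|_{v} = \|P v\|_{v}$: indeed $-v \le x \le v$ implies $-Pv \le Px \le Pv$, whence $\|Px\|_{v} \le \|Pv\|_{v}$. Applying this with $P = g_{\epsilon}^{\,n}$ and $v = v_{\epsilon}$ gives $\|g_{\epsilon}^{\,n}\|_{v_{\epsilon}} = \|\lambda_{\epsilon}^{\,n} v_{\epsilon}\|_{v_{\epsilon}} = \lambda_{\epsilon}^{\,n}$, so the spectral radius formula gives $\rho(g_{\epsilon}) = \lim_{n} \|g_{\epsilon}^{\,n}\|_{v_{\epsilon}}^{1/n} = \lambda_{\epsilon}$. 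Now I would let $\epsilon \to 0$: as $g_{\epsilon} \to g$ in operator norm and $\rho$ depends continuously on the operator, $\lambda_{\epsilon} = \rho(g_{\epsilon}) \to \rho(g)$; and since the $v_{\epsilon}$ all lie in the compact set $B$, a subsequence converges to some $v_{0} \in B \subseteq C$ with $\ell(v_{0}) = 1$, in particular $v_{0} \ne 0$. Passing to the limit in $g_{\epsilon}(v_{\epsilon}) = \lambda_{\epsilon} v_{\epsilon}$ yields $g(v_{0}) = \rho(g)\, v_{0}$, so $v_{g} := v_{0} \in C$ is the desired eigenvector.

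The main obstacle is precisely this identification of the eigenvalue with the spectral radius. A bare Brouwer argument applied to $g$ itself only produces \emph{some} nonnegative real eigenvalue with an eigenvector in $C$, and a priori a complex eigenvalue of strictly larger modulus could exist; it is the positivity of $g$ that forbids this, but making that visible requires the detour through the strictly positive operators $g_{\epsilon}$, whose interior eigenvector forces the eigenvalue to be the spectral radius via the order-unit norm. Continuity of $\rho$ together with compactness of $B$ then transports the conclusion back to $g$.
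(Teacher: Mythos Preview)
Your argument is correct. The paper itself does not prove this lemma at all: it is stated with a bare citation to \cite{Birkhoff67} and then used as a black box throughout. So there is no ``paper's own proof'' to compare against; you have supplied a complete, self-contained argument where the authors simply invoke the literature.

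Your route --- perturb to a strictly positive operator $g_{\epsilon}$, use Brouwer on a compact base $B$ of the cone to get an eigenvector, identify the eigenvalue with $\rho(g_{\epsilon})$ via the order-unit norm, then pass to the limit using continuity of the spectral radius and compactness of $B$ --- is a standard and clean way to establish the cone version of Perron--Frobenius. The only steps worth double-checking are both fine: $C$ spanning $V$ guarantees $\operatorname{int}(C)\neq\emptyset$ (so the order unit $u$ exists), and $C$ being salient guarantees that $C^{*}$ has nonempty interior (so the strictly positive functional $\ell$ exists). The identification $\|P\|_{v}=\|Pv\|_{v}$ for a positive $P$ and order unit $v$ is exactly the leverage that forces $\lambda_{\epsilon}=\rho(g_{\epsilon})$, which as you note is the genuine content beyond the naive Brouwer argument.
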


We now prepare some general results on $\OO(L)$ for a hyperbolic lattice $L$.

\begin{lemma}[{cf. \cite[Chapter 5]{Rat}, or \cite[Proposition 2.2]{Oguiso07}, or
\cite[Theorem 2.2]{CWZ14}}] \label{Oguiso1}
Let $L$ be a hyperbolic lattice. Suppose that $G \le \OO(L)'$ is of null entropy.
Then $G$ contains $U$ as a finite-index normal subgroup,
where
$$U = U(G) := \{g \in G : g \ \text{\rm is unipotent as an element in} \, \GL(L_{\bbC})\} .$$
\end{lemma}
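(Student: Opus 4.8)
The plan is to read off the dynamics of each isometry from its action on the hyperbolic space $\mathbb{H} := \BPP(\cC(L_\bbR))$ associated to the positive cone, whose ideal boundary consists of the isotropic rays in $\partial\cC(L_\bbR)$. Every $g\in\OO(L)'$ is \emph{elliptic}, \emph{parabolic}, or \emph{loxodromic} according to whether it fixes an interior point of $\mathbb{H}$, exactly one point of $\partial\mathbb{H}$, or a pair of points of $\partial\mathbb{H}$; and $\rho(g)>1$ exactly when $g$ is loxodromic (the translation length along the axis being $\log\rho(g)$). Thus the hypothesis that $G$ is of null entropy, i.e. $\rho(g)=1$ for all $g\in G$, is precisely that $G$ contains no loxodromic element. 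I would record at the outset that each $g\in G$ is quasi-unipotent: its eigenvalues are algebraic integers on the unit circle, hence roots of unity by Kronecker, and any positive real eigenvalue on a fixed isotropic ray is therefore $1$.

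The geometric core, and the step I expect to be the main obstacle, is the following dichotomy, for which I would invoke the classification of elementary groups of hyperbolic isometries following Ratcliffe \cite{Rat}. Since $G\le\OO(L)\le\GL(L)\cong\GL(r,\bbZ)$ is discrete and has no loxodromic element, its limit set $\Lambda$ has at most one point: a two-point limit set would force $G$ to preserve a geodesic axis and, lacking loxodromic translations along it, to fix an interior point, contradicting $|\Lambda|=2$. Hence either (i) $\Lambda=\varnothing$, so $G$ has a bounded orbit and fixes an interior point of $\mathbb{H}$; or (ii) $|\Lambda|=1$, and since $\Lambda$ is $G$-invariant, all of $G$ fixes a single isotropic ray. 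The delicate point here is the reduction of the two-point case; the discreteness needed for limit-set theory is immediate from $G\subseteq\GL(L)$.

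In case (i) I would show $G$ is finite: fixing an interior vector $v$ with $(v,v)>0$, the form equal to $(\,\cdot\,,\,\cdot\,)$ on $\bbR v$ and its negative on $v^\perp$ is positive definite and $G$-invariant, so $G$ lies in a compact orthogonal group; being also a subgroup of the discrete group $\GL(L)$, it is finite. Then $U(G)=\{1\}$ (a finite-order unipotent element is trivial in characteristic zero), a finite-index normal subgroup. In case (ii) the fixed ray is rational: $G$ is infinite, hence contains an element of infinite order, necessarily parabolic, whose fixed isotropic ray is the unique isotropic ray in the rational $1$-eigenspace $\ker(g_0-\id)\subseteq L_\bbQ$; after scaling we obtain a primitive isotropic $v\in L$ fixed by $G$.

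Finally I would exploit the $G$-invariant filtration $0\subset\bbR v\subset v^\perp\subset L_\bbR$, whose middle graded piece is the negative-definite lattice $M:=(v^\perp\cap L)/\bbZ v$; the induced homomorphism $\bar\rho\colon G\to\OO(M)$ has finite image, since $\OO(M)$ is finite for a definite lattice. The assertion is that $U(G)=\ker\bar\rho$, which is manifestly a finite-index normal subgroup. For "$\supseteq$", an element of $\ker\bar\rho$ acts trivially on all three graded pieces (on $\bbR v$ and, by duality of the form, on $L_\bbR/v^\perp$ via the eigenvalue-$1$ remark, and on $M$ by definition), so $g-\id$ lowers the filtration and $g$ is unipotent; for "$\subseteq$", a unipotent $g$ has unipotent image in $\OO(M)$ of finite order, hence trivial image. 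This identifies $U=U(G)$ with a kernel, giving both normality and finite index. The only genuinely hard input is the elementary-groups dichotomy of the second paragraph; the remaining steps are linear algebra on the filtration together with the finiteness of the isometry group of a definite lattice.
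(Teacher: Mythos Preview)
Your proof is correct and takes a genuinely different route from the paper's.

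The paper argues purely algebraically: it cites \cite[Lemma~2.5]{Oguiso07} for the fact that $U$ is a (normal) subgroup, passes to a finite-index subgroup whose Zariski closure in $\GL(L_\bbC)$ is connected, observes that $G/U$ embeds in an algebraic torus, and then invokes Kronecker together with Burnside's theorem on linear groups of bounded exponent to conclude $|G:U|<\infty$. Your argument is instead geometric: you use the hyperbolic model $\mathbb{H}=\BPP(\cC(L_\bbR))$ and the elliptic/parabolic/loxodromic trichotomy to reduce (via the theory of elementary discrete groups in \cite{Rat}) to the case where $G$ fixes a primitive isotropic $v\in L$, and then read off $U(G)$ as the kernel of the natural map $G\to\OO\big((v^\perp\cap L)/\bbZ v\big)$ into the finite orthogonal group of a definite lattice. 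This buys you normality of $U$ for free (it is a kernel) without appealing to \cite{Oguiso07}, and replaces Burnside's bounded-exponent theorem by the more elementary finiteness of $\OO(M)$ for definite $M$; on the other hand you import more hyperbolic-geometry machinery. One small point: your assertion that $|\Lambda|\le 1$ needs, in addition to your argument ruling out $|\Lambda|=2$, the standard fact that a non-elementary discrete subgroup of $\mathrm{Isom}(\mathbb{H}^{r-1})$ contains a loxodromic element; this is indeed in \cite{Rat}, but is separate from the ``classification of elementary groups'' you cite, so you should name it explicitly.
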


\begin{proof}
This is implicitly proved in \cite[Proposition 2.2]{Oguiso07}. Indeed, by \cite[Lemma 2.5]{Oguiso07}, $U$ is a subgroup of (and hence normal in) $G$.
Replacing $G$ by a finite-index subgroup, we may assume that the Zariski-closure of $G$ in $\GL(L_{\bbC})$ is connected.
Now the quotient group $G/U$ can be embedded into an algebraic torus (and hence in some general linear group)
and every element of it is of finite order (bounded by a constant depending only on $\rank(L)$) by Kronecker's theorem.
Note also that every subgroup of a general linear group over $\bbC$ with bounded exponent is a finite group by the classical Burnside's Theorem.
Hence $|G : U| < \infty$ as required.
\end{proof}

The result below follows from the classification of elliptic, parabolic or loxodromic elements;
see for instance \cite{Cantat14}, or the proof of \cite[Lemma 2.12]{Zhang08}.

\begin{lemma} [{cf. \cite{Cantat14}, or proof of \cite[Lemma 2.12]{Zhang08}}] \label{e.v.}
Let $L$ be a hyperbolic lattice with $\cC(L_{\bbR})$ the positive cone. Let $g \in \OO(L)'$ such that
$g(v) = \lambda v$ for some $0 \ne v \in \overline \cC(L_{\bbR})$.
\begin{enumerate}
\item[(1)]
If $\lambda = 1$, then $g$ is of null entropy.
\item[(2)]
If $\lambda > 1$ (resp. $<1$), then
$g$ is of positive entropy, $v$ is parallel to the $v_g$ (resp. $v_{g^{-1}}$) in Lemma {\rm \ref{Bir}} and $\rho(g) = \lambda$ (resp. $\rho(g^{-1}) = \lambda^{-1}$).
\item[(3)]
Suppose that $\lambda \ne 1$. Then
$\rho(g)$ is a Salem number or a quadratic integer
(and hence an irrational algebraic integer),
the $v_g$ in Lemma {\rm \ref{Bir}} in the current case is unique up to a scalar, and $\rho(g) = \rho(g^{-1})$.
\end{enumerate}
\end{lemma}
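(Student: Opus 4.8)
The plan is to read everything off the eigenvalues of $g$ acting on $L_{\bbC}$, thereby realizing the elliptic/parabolic/loxodromic trichotomy referred to above. The one structural input I would establish first is this: since $g$ preserves the form, its generalized eigenspaces $V_\mu, V_\nu$ (for eigenvalues $\mu,\nu$) are orthogonal whenever $\mu\nu \ne 1$, because $(x,y)=(gx,gy)$ forces $(\mu\nu-1)(x,y)=0$ on eigenvectors (and a short induction handles generalized eigenvectors). Consequently the sum $W:=\bigoplus_{|\mu|>1} V_\mu$ is totally isotropic: any two of its eigenvalues have product of modulus $>1$, hence $\ne 1$. As $L$ has signature $(1,\rank L-1)$, a totally isotropic subspace has dimension at most $1$, so $g$ has \emph{at most one} eigenvalue of modulus $>1$ (counted with multiplicity). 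When such an eigenvalue exists it must be real (a non-real one would force its complex conjugate to be a second eigenvalue of the same modulus), and by Lemma~\ref{Bir} applied to $g$ and the salient cone $\overline\cC(L_{\bbR})$ it equals $\rho(g)$, with a one-dimensional eigenspace spanned by some $v_g\in\overline\cC(L_{\bbR})$; the same applied to $g^{-1}$ produces $v_{g^{-1}}$. This single dichotomy --- either all eigenvalues lie on the unit circle, or there is exactly one pair $\rho(g),\rho(g)^{-1}$ off it --- drives all three parts.

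For part (1), from $gv=v$ I get $(v,v)=(gv,gv)=(v,v)$, which is vacuous, so I split on $(v,v)$. If $(v,v)>0$ then $v$ lies in the interior of the cone, $v^{\perp}$ is negative definite and $g$-invariant, hence $g|v^{\perp}$ lies in a compact orthogonal group and has all eigenvalues of modulus $1$; together with the eigenvalue $1$ on $\langle v\rangle$ this gives $\rho(g)=1$. If $(v,v)=0$, suppose to the contrary $\rho(g)>1$. Then $v$ and $v_g$ are distinct null vectors of the \emph{forward} cone $\overline\cC(L_{\bbR})$ with eigenvalues $1$ and $\rho(g)$, so $(v,v_g)=0$ by the orthogonality above; but in Lorentzian signature two null vectors of the closed forward cone that are orthogonal must be parallel, contradicting that they have different eigenvalues. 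Hence $\rho(g)=1$.

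For part (2) with $\lambda>1$, the isometry relation gives $(\lambda^2-1)(v,v)=0$, so $v$ is null. Now $\lambda$ is an eigenvalue of modulus $>1$; by the dichotomy it is the unique such, equal to $\rho(g)$, with one-dimensional eigenspace, so $v$ is parallel to $v_g$ and $g$ has positive entropy. The case $\lambda<1$ follows by applying this to $g^{-1}$, for which $g^{-1}v=\lambda^{-1}v$ with $\lambda^{-1}>1$.

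For part (3) I first note $\rho(g)=\rho(g^{-1})$: the spectrum of $g$ is symmetric under $\mu\mapsto\mu^{-1}$, so the two spectral radii agree. Uniqueness of $v_g$ up to a scalar is exactly the one-dimensionality established in the first paragraph. For the arithmetic claim, $\lambda=\rho(g)$ is an eigenvalue of the integral matrix $g\in\GL(L)\cong\GL_{\rank L}(\bbZ)$, hence a real algebraic integer $>1$; its characteristic polynomial is reciprocal (roots come in pairs $\mu,\mu^{-1}$), so $\lambda^{-1}$ is also a root, while by the dichotomy every remaining eigenvalue lies on the unit circle. I would then analyze the minimal polynomial $m$ of $\lambda$ over $\bbQ$: if $\deg m=2$ it is reciprocal, $m(x)=x^2-ax+1$ with $a\in\bbZ$, and $\lambda$ is a quadratic integer; if $\deg m\ge 3$ then, after checking $m$ is reciprocal (so that $\lambda^{-1}$ is a genuine conjugate and the product of all conjugates is $\pm1$), the conjugates are exactly $\lambda,\lambda^{-1}$ together with at least one pair on the unit circle, which is precisely a Salem number. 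I expect this last bookkeeping --- confirming that no extra conjugate slips off the unit circle and cleanly separating the degree-two case from the Salem case --- to be the main obstacle; the eigenvalue dichotomy of the first paragraph is what makes it tractable, since it pins every conjugate's modulus in advance.
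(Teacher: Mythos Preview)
Your argument is correct and complete in substance, but it follows a genuinely different route from the paper's.

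The paper argues parts (1)--(2) by a Hodge-index computation: assuming $g$ has positive entropy, it takes the Perron--Frobenius vectors $v_{g^{\pm 1}}$ from Lemma~\ref{Bir} and shows directly that if $v$ were parallel to neither, then $(v,v_g)>0$ forces $\lambda=\rho(g)^{-1}<1$ while $(v,v_{g^{-1}})>0$ forces $\lambda=\rho(g^{-1})>1$, a contradiction. For part (3) the paper simply cites \cite[Proposition~2.5]{Oguiso06} for the Salem property and reads off uniqueness and $\rho(g)=\rho(g^{-1})$ from that. Your approach instead establishes the global eigenvalue dichotomy (at most one eigenvalue off the unit circle, necessarily real and simple) once and for all via orthogonality of generalized eigenspaces, and then reads everything off that picture, including a self-contained derivation of the Salem/quadratic alternative. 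Your route is more structural and yields a clearer spectral picture; the paper's is shorter and leans on the literature.

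One point to tighten: when you say ``as $L$ has signature $(1,\rank L-1)$, a totally isotropic subspace has dimension at most~$1$'', this bound holds in $L_{\bbR}$, not in $L_{\bbC}$ (over $\bbC$ the Witt index is $\lfloor \rank L/2\rfloor$). Your $W=\bigoplus_{|\mu|>1}V_\mu$ lives in $L_{\bbC}$; to invoke the signature bound you should note that $W$ is stable under complex conjugation (since $|\bar\mu|=|\mu|$), hence $W=(W\cap L_{\bbR})\otimes\bbC$ and $\dim_{\bbC}W=\dim_{\bbR}(W\cap L_{\bbR})\le 1$. You implicitly use this when you observe that a non-real eigenvalue would drag its conjugate along, so the fix is immediate. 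In part (3), your reciprocity check for the minimal polynomial goes through because $\lambda$ is a unit (the characteristic polynomial has constant term $\pm 1$), so the norm argument you sketch rules out $\lambda^{-1}$ failing to be a conjugate; it is also worth remarking that odd degree $\ge 3$ cannot occur, since a reciprocal irreducible polynomial of odd degree would have $\pm 1$ as a root.
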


\begin{proof}
For the assertions (1) and (2), the proof is similar to \cite[Lemma 2.12]{Zhang08}.
We only have to consider the case where $g$ is of positive entropy.
Hence $g^{-1}$ is also of positive entropy, because $\det(g) = \pm 1$ and $g$ is defined over the integral lattice $L$.
By Lemma \ref{Bir}, there are nonzero eigenvectors $v_{g^{\pm 1}} \in \overline\cC(L_\bbR)$ such that
$$g^{\pm 1}(v_{g^{\pm 1}}) = \rho(g^{\pm 1}) v_{g^{\pm 1}} .$$
It suffices to show the claim that $v$ is parallel to one of $v_{g^{\pm 1}}$.
Suppose the contrary that this claim is not true.
The hyperbolicity of $L$ and the Hodge index type theorem imply that
$$0 < (v, v_g) = (g(v), g(v_g)) = \lambda \rho(g) (v, v_g) .$$
Hence $\lambda = 1/\rho(g) < 1$.
By the same reasoning,
$$0 < (v, v_{g^{-1}}) = (g^{-1}(v), g^{-1}(v_{g^{-1}})) = \lambda^{-1} \rho(g^{-1}) (v, v_{g^{-1}}) .$$
Hence $\lambda = \rho(g^{-1}) > 1$, contradicting the previous outcome.
Thus the claim is true. Hence the assertions (1) and (2) are true.

For the assertion (3), $\rho(g)$ is a Salem number by \cite[Proposition 2.5]{Oguiso06}.
Hence $\rho(g) = \rho(g^{-1})$. The uniqueness follows from the above argument for assertion (2).
\end{proof}

\begin{lemma}[{cf. \cite[\S 5.5]{Rat} or \cite[Theorem 2.1]{Oguiso07}}] \label{Oguiso}
Let $L$ be a hyperbolic lattice and let $G \le \OO(L)'$. Suppose that $G$ is of null entropy
and $G$ is an infinite group. Then we have:

\begin{enumerate}
\item[(1)]
There is a unique (up to scalars) nonzero $v \in \overline{\cC}(L_{\bbR})$
such that $g(v) = v$ for all $g \in G$.
Moreover, $v^2 = 0$ and $v$ can be taken to be in the integral lattice $L$.

\item[(2)]
Suppose that $W$ is a non-trivial $G$-invariant closed subcone of $\overline{\cC}(L_{\bbR})$. Then $v$ belongs to $W$.

\item[(3)]
If $L = \NS(X)/(\torsion)$ (resp. $\NS(X)$) for some smooth projective surface (resp. projective hyperk\"ahler manifold) $X$
and the above $G$ equals $H | L$ for some group $H \le \Aut(X)$, then $v$ is a nef divisor class.

\item[(4)]
If $L = \NS(X)$ for some projective hyperk\"ahler manifold $X$ and the above $G$ equals $H | L$ for some
group $H \le \Bir(X)$, then $v$ belongs to the movable cone $\Mov(X)$.
\end{enumerate}
\end{lemma}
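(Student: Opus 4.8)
The plan is to prove (1) first and to deduce (2), (3) and (4) from it; the only substantial point will be (1), after which (3) and (4) become one-line specializations of (2). For (1), I would first use Lemma \ref{Oguiso1} to pass to the unipotent part: since $G \le \OO(L)'$ is infinite and of null entropy, the normal finite-index subgroup $U = U(G)$ of unipotent isometries is again infinite. A unipotent isometry of finite order is the identity, so $U$ is torsion-free, and any $u \ne \id$ in $U$ has infinite order and, being unipotent, is not loxodromic. Applying Lemma \ref{Bir} to the salient closed cone $\overline{\cC}(L_\bbR)$ produces $0 \ne v_0 \in \overline{\cC}(L_\bbR)$ with $u(v_0)=v_0$; if $v_0$ were interior (i.e.\ $(v_0,v_0)>0$), then $u$ would restrict to a unipotent isometry of the negative-definite lattice $v_0^\perp$, which lies in a compact group and is therefore trivial, forcing $u=\id$, a contradiction. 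Hence $(v_0,v_0)=0$ and $u$ is parabolic. The classical structure of parabolic isometries of a hyperbolic lattice (cf.\ \cite[Chapter 5]{Rat}) then shows that $u-\id$ has a single Jordan block of size $3$, that $\Ker(u-\id)\subseteq v_0^\perp$, and that the induced form on $v_0^\perp$ is negative semidefinite with radical $\bbR v_0$, so $\bbR v_0$ is the unique isotropic ray in $v_0^\perp$.

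Next I would show that all of $G$ fixes $\bbR v_0$. The key geometric input, for which I would follow \cite[\S 5.5]{Rat} (equivalently \cite[Theorem 2.1]{Oguiso07}), is that a discrete group of isometries of hyperbolic space with no loxodromic element cannot contain two parabolics with distinct fixed points at infinity --- otherwise a ping-pong/North--South argument on the sphere at infinity would manufacture a loxodromic element. Since $G$ is discrete (it lies in $\GL(L)$) and of null entropy, all its parabolic elements, in particular all of $U\setminus\{\id\}$, share the single isotropic ray $\bbR v_0$. To pass from $U$ to $G$ I use $U\lhd G$: for $g\in G$ and $u\in U$ we have $gug^{-1}\in U$, hence $u(g^{-1}v_0)=g^{-1}v_0$ for every $u$, so $g^{-1}v_0$ again lies in the common fixed isotropic ray, i.e.\ $g^{-1}v_0=\lambda v_0$ with $\lambda>0$. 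Lemma \ref{e.v.} then forces $\lambda=1$, since any $\lambda\ne1$ would make $g$ or $g^{-1}$ of positive entropy; thus $g(v_0)=v_0$ for all $g\in G$. Uniqueness up to scalar together with $v_0^2=0$ follows from the structure above: the common fixed space $V^G=\{x: gx=x\ \forall g\in G\}$ lies in $v_0^\perp$, hence is negative semidefinite, so $V^G\cap\overline{\cC}(L_\bbR)$ equals its radical, namely the single ray $\bbR_{\ge0}v_0$ (in particular there is no fixed vector of positive square, consistent with $G$ being infinite rather than contained in the compact stabilizer of an interior vector). Finally $V^G$ is a $\bbQ$-rational subspace (the kernel of the integral operators $g-\id$), so its radical is rational and $v_0$ can be scaled into the integral lattice $L$; this completes (1).

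For (2), let $0\ne w\in W$ and choose a parabolic $u=\id+N\in U$ as above, with $N^3=0$. Then $u^n w = w + nNw + \binom{n}{2}N^2 w$. Here $\Imm N^2=\bbR v_0$, and when $N^2w=0$ the isometry identity $(u^nw,u^nw)=(w,w)$ (valid for all $n$) forces $(Nw,Nw)=0$, so $Nw$ is an isotropic vector of $\Imm N$, whose only isotropic direction is $\bbR v_0$. In every case the dominant term of $u^n w$ points along $v_0$, so the projective limit of $u^n w$ is $[v_0]$; since $W$ is a closed $G$-invariant subcone we have $u^n w\in W$, and the limiting ray $\bbR_{\ge0}v_0$ lies in $W$, i.e.\ $v_0\in W$. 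Parts (3) and (4) are then immediate: for (3) I take $W=\Nef(X)$, a non-trivial (it contains ample classes) closed convex subcone of $\overline{\cC}(L_\bbR)$ preserved by the biregular action $G=H|L$, so $v_0$ is nef; for (4) I take $W=\Mov(X)=\overline{\BK}(X)\cap\NS_\bbR(X)$, which is non-trivial, closed, contained in $\overline{\cC}(L_\bbR)$ and preserved by the codimension-one action of $H\le\Bir(X)$, so $v_0\in\Mov(X)$.

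The main obstacle is the geometric heart of (1): upgrading ``each parabolic has its own fixed point at infinity'' to ``the whole infinite discrete null-entropy group has a \emph{common} one''. This is exactly the content I would import from \cite[\S 5.5]{Rat} and \cite[Theorem 2.1]{Oguiso07}. By comparison, the lattice bookkeeping ($v_0^2=0$, rationality, and the eigenvalue-$1$ normalization via Lemma \ref{e.v.}) and the limiting argument for (2) are routine.
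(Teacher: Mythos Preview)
Your proof is correct. For (1), (3), and (4) you and the paper end up in essentially the same place: (1) is imported from \cite[\S 5.5]{Rat}/\cite{Oguiso07} (you unpack the ping-pong/parabolic argument more explicitly, the paper just cites \cite[Lemma~2.8]{Oguiso07}), and (3)--(4) are in both treatments one-line specializations of (2) to $\Nef(X)$ and $\Mov(X)$.

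The genuine methodological difference is in (2). The paper argues abstractly: $G$ is virtually solvable by Lemma~\ref{Oguiso1}, so the Lie--Kolchin theorem for a cone \cite[Theorem~1.1]{KOZ09} produces a common eigenray $\bbR_{>0}v'\subseteq W$ for a finite-index subgroup $G_1\le G$; null entropy and Lemma~\ref{e.v.} force $G_1(v')=v'$; then the uniqueness in (1), applied to the infinite null-entropy group $G_1$, gives $v\in\bbR_{>0}v'\subseteq W$. Your route is instead a direct dynamical computation: pick one parabolic $u=\id+N\in U$ and show the projective iterates $[u^n w]$ converge to $[v_0]$ for any $0\ne w\in W$. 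Your argument is more elementary and self-contained (no appeal to \cite{KOZ09}); the paper's argument is softer and would transplant to any solvable action on a salient cone without knowing the Jordan structure. Two small remarks on your case split: when $Nw=0$ you are implicitly using that $w\in\Ker(u-\id)\cap\overline{\cC}(L_\bbR)=\bbR_{\ge0}v_0$, which you established earlier; and when $N^2w=0$ one can also note directly that $Nw\in N(\Ker N^2)=\Imm N^2=\bbR v_0$, bypassing the isotropy computation (though your isotropy argument is correct too, since $\Imm N\subseteq(\Ker N)^\perp\subseteq v_0^\perp$).
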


\begin{proof}
Part (1) is contained in \cite[\S 5.5]{Rat} or \cite{Oguiso07}.
Indeed, \cite[Lemma 2.8]{Oguiso07} implies the claim that there is no $v \in L_{\bbQ}$
such that $v^2 > 0$ and $g(v) = v$ for all $g \in G$.
There is also no such $v$ in $L_{\bbR}$. Indeed, as noticed in \cite{Oguiso07},
the eigenspace $V(g, 1)$ of eigenvalue $1$
is defined over $\bbQ$, hence the intersection $\cap_{g \in G} \, V(g, 1)$ is also defined over $\bbQ$.
Thus the existence of such $v$ in $L_{\bbR}$
(and the density of $\bbQ$ in $\bbR$) would imply the same for a $v$ in $L_{\bbQ}$, contradicting the claim.
By the claim above and \cite[Lemma 2.8]{Oguiso07}
there indeed exists a unique nonzero ray $\bbR_{>0} \, v \subseteq \overline{\cC}(L_{\bbR})$
such that $g(v) = v$ for all $g \in G$;
further $v^2 = 0$ and $v$ can be taken to be in $L$.

Parts (3) and (4) are consequences of part (2) applied to the nef cone $\Nef(X)$ and the movable cone $\Mov(X)$, respectively.
We also note that a bimeromorphic map of a hyperk\"ahler manifold $X$ is an isomorphism in codimension one,
hence induces an isomorphism of $\NS(X)$ and preserves the movable cone $\Mov(X)$.

For part (2), note that $G$ is virtually unipotent and hence virtually solvable
(cf.~\cite[Proof of Theorem 2.1]{Oguiso07} or \cite[Proof of Theorem 2.2]{CWZ14}).
Thus $G_1(v') \subseteq \R_{> 0} v'$ for some nonzero element $v'\in W$
and a finite-index subgroup $G_1$ of $G$,
by the cone theorem of Lie-Kolchin type (cf. \cite[Theorem 1.1]{KOZ09}).
By Lemma \ref{e.v.} and the assumption $G=N(G)$, we have $G_1 (v') = v'$.
Now apply the uniqueness property in part (1) to $G_1$, we conclude that $v\in \bbR_{>0}v' \subseteq W$.
This proves part (2) and the whole lemma.
\end{proof}

\begin{lemma}\label{fin}
Let $L$ be a hyperbolic lattice and let $G \le \OO(L)'$.
Suppose that the null-subset $N(G)$ is a subgroup of (and hence normal in) $G$ and $G \ne N(G)$.
Then $N(G)$ is finite.
\end{lemma}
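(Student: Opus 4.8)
The plan is to argue by contradiction: I assume $N(G)$ is infinite and derive a contradiction with the discreteness of $\OO(L)$ inside $\GL(L_{\bbR})$. Since $N(G) \le \OO(L)'$ is an infinite group of null entropy, Lemma \ref{Oguiso}(1) provides a nonzero isotropic class $v \in L \cap \overline{\cC}(L_{\bbR})$, unique up to scaling, with $h(v) = v$ for every $h \in N(G)$. As $N(G)$ is normal in $G$, for any $g \in G$ and any $h \in N(G)$ the element $g^{-1}hg$ lies in $N(G)$ and fixes $v$, whence $h(g(v)) = g(v)$; thus $g(v) \in \overline{\cC}(L_{\bbR})$ is again a common fixed class of $N(G)$, and the uniqueness in Lemma \ref{Oguiso}(1) forces $g(v) = \mu(g)\,v$ for some $\mu(g) > 0$. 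This defines a homomorphism $\mu : G \to \bbR_{>0}$. Since $G \ne N(G)$ I may pick $g \in G$ of positive entropy; then $\mu(g) \ne 1$, for otherwise $g$ would fix the isotropic class $v$ and be of null entropy by Lemma \ref{e.v.}(1). Replacing $g$ by $g^{-1}$ if necessary, I assume $\mu := \mu(g) > 1$, so that $v = v_g$ and $\mu = \rho(g)$ by Lemma \ref{e.v.}(2).

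Next I would pass to the unipotent part $U := U(N(G))$, which by Lemma \ref{Oguiso1} is a finite-index, hence infinite, subgroup of $N(G)$; it is preserved under conjugation by $g$ because unipotency is a conjugacy invariant. Each $u \in U$ fixes $v$ and induces a unipotent map on the subquotient $v^{\perp}/\bbR v$. But the hyperbolic form induces a negative-definite form there, so this induced map is at once unipotent and orthogonal, hence semisimple with all eigenvalues $1$, hence trivial. Therefore every $u \in U$ is a transvection $h_m$ fixing $v$ and acting trivially on $v^{\perp}/\bbR v$, parametrised by a vector $m$ in the negative-definite space $M := (\bbR v_g \oplus \bbR v_{g^{-1}})^{\perp}$; concretely $h_m(v_g) = v_g$, $h_m(x) = x - (m,x)v_g$ for $x \in M$, and $h_m(v_{g^{-1}}) = v_{g^{-1}} + m - \frac{1}{2}(m,m)v_g$, with $m \mapsto h_m$ an isomorphism onto an abelian group.

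Finally I would exploit conjugation by $g$. In the adapted decomposition $L_{\bbR} = \bbR v_g \oplus \bbR v_{g^{-1}} \oplus M$ one has $g = \diag(\mu, \mu^{-1}, \bar g)$, where $\bar g := g|_M$ is an isometry of the definite space $M$ and so preserves the corresponding norm $\norm{\cdot}$. A direct computation yields $g\,h_m\,g^{-1} = h_{\mu\bar g(m)}$, and hence $g^{-n} h_m g^{n} = h_{\mu^{-n}\bar g^{-n}(m)}$. Since $\norm{\mu^{-n}\bar g^{-n}(m)} = \mu^{-n}\norm{m} \to 0$ and the entries of $h_{m'}$ are polynomial in $m'$ with $h_0 = \id$, the elements $g^{-n} h_m g^{n}$ converge to $\id$ in $\GL(L_{\bbR})$. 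As they all lie in $\OO(L) \subseteq \GL(L)$, a discrete subgroup of $\GL(L_{\bbR})$, we get $g^{-n} h_m g^{n} = \id$ for $n \gg 0$, forcing $h_m = \id$. Thus $U$ is trivial, contradicting its infiniteness, and so $N(G)$ is finite.

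The main obstacle lies in the last two steps: first identifying the unipotent elements fixing $v$ as exactly the transvections $h_m$ (which relies on the definiteness of $v^{\perp}/\bbR v$ to annihilate the semisimple part), and then establishing the contraction formula $g\,h_m\,g^{-1} = h_{\mu\bar g(m)}$ in the $g$-adapted decomposition. Once the scaling of the transvection parameter by $\mu > 1$ is in place, the discreteness of $\OO(L)$ closes the argument at once.
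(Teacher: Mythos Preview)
Your argument is correct, but it takes a genuinely different route from the paper's. Both proofs begin identically: assume $N(G)$ infinite, invoke Lemma~\ref{Oguiso} to get a unique $N(G)$-fixed integral isotropic class $v$, and use normality of $N(G)$ to see that any $g \in G$ scales $v$ by some $\mu(g) > 0$. At this point the paper finishes in one stroke: since $v \in L$ and $g$ preserves $L$, the scalar $\mu(g)$ is rational, whereas for $g$ of positive entropy Lemma~\ref{e.v.}(3) forces $\mu(g) = \rho(g)^{\pm 1}$ to be a Salem number or real quadratic unit, hence irrational --- contradiction. Your proof instead bypasses this arithmetic observation and works dynamically: you identify the unipotent part $U \le N(G)$ with an abelian group of transvections $h_m$ in the $g$-adapted splitting $\bbR v_g \oplus \bbR v_{g^{-1}} \oplus M$, compute $g h_m g^{-1} = h_{\mu \bar g(m)}$, and use $\mu > 1$ together with discreteness of $\OO(L) \subset \GL(L_{\bbR})$ to contract any $h_m$ to the identity. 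The paper's approach is considerably shorter and exploits the integrality of $v$ and the Salem-number input already recorded in Lemma~\ref{e.v.}(3); your approach is more structural, makes the horospherical picture explicit, and would survive in settings where the spectral-radius-is-irrational step is unavailable (e.g.\ over $\bbR$-coefficients), at the cost of a longer computation.
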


\begin{proof}
Suppose the contrary that $|N(G)| = \infty$. By Lemma \ref{Oguiso} or \cite[Lemma 2.8]{Oguiso07},
there is a nonzero $v \in \partial \cC(L_{\bbR})$ such that $h(v) = v$ for any $h\in N(G)$.
Further, the ray $\bbR_{>0}v \subseteq \partial\cC(L_{\bbR})$ is unique and defined over $\bbZ$.
Now for any $g\in G$ and $h\in N(G)$, we must have $h(g(v)) = g(v)$ since $N(G) \lhd G$.
Thus $g(v) = r_g v$ for some $r_g\in \bbR_{>0}$ by the uniqueness of the above ray.
Moreover, $r_g$ must be rational since $v$ is in the integral lattice $L$.
Take one $g \in G$ which is of positive entropy.
Since $g(v) = r_g v$, we have $r_g = \rho(g)^{\pm 1}$ by Lemma \ref{e.v.}.
But $\rho(g)$ is a Salem number or a quadratic integer and hence is irrational, contradicting the rationality of $r_g$.
Hence $N(G)$ is finite.
\end{proof}

The result below is contained in \cite[Theorems 5.5.9 - 5.5.10]{Rat}. See also \cite[Theorem 3.2]{Cantat14}
or \cite[Theorem 3.1]{Zhang08} for $\NS(X)$ of a surface $X$.

\begin{lemma}[{cf. \cite[Theorems 5.5.9 - 5.5.10]{Rat}}] \label{Lattice equiv}
Let $L$ be a hyperbolic lattice and let $G \le \OO(L)'$. Then the following are equivalent:
\begin{enumerate}
\item [(1)] $G$ is virtually solvable.
\item [(2)] Replacing $G$ by a finite-index subgroup,
there is a real vector $v \in \overline \cC(L_{\bbR})\setminus\{0\}$ such that $G(v) \subseteq \bbR_{>0} v$.
\item [(3)]
Replacing $G$ by a finite-index subgroup, we have $N(G) \lhd G$ and $G/N(G) \cong \bbZ^{\oplus r}$
for some nonnegative integer $r \le 1$.
\end{enumerate}
\end{lemma}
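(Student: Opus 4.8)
The plan is to establish the cycle of implications $(1)\Rightarrow(2)\Rightarrow(1)$ together with $(2)\Rightarrow(3)\Rightarrow(1)$, so that all three are equivalent. Throughout I would use two structural facts about the ambient objects: first, $G\le\OO(L)'\le\GL(L_{\bbC})$ is a \emph{discrete} subgroup of the linear algebraic group $\GL(L_{\bbC})$ (being contained in $\GL_r(\bbZ)$); and second, by hyperbolicity of $L$, the closed positive cone $\overline{\cC}(L_{\bbR})$ is a \emph{salient} closed convex cone spanning $L_{\bbR}$. For $(1)\Rightarrow(2)$ I would replace $G$ by a finite-index solvable subgroup and apply the Lie--Kolchin type cone theorem \cite[Theorem 1.1]{KOZ09} to the $G$-invariant salient cone $\overline{\cC}(L_{\bbR})$: after a further finite-index reduction this produces a common eigenvector $v\in\overline{\cC}(L_{\bbR})\setminus\{0\}$, and since each $g$ is invertible and carries the salient cone into itself, the eigenvalue is positive, i.e. $G(v)\subseteq\bbR_{>0}v$.

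The conceptual heart of the argument is the character attached to the configuration in $(2)$. I would define $\chi\colon G\to\bbR_{>0}$ by $g(v)=\chi(g)\,v$; this is a homomorphism whose image is a torsion-free abelian subgroup of $\bbR_{>0}$. By Lemma \ref{e.v.}(1)--(2), one has $\chi(g)=1$ exactly when $g$ is of null entropy, so that $N(G)=\ker\chi$ is a normal subgroup of $G$ and $G/N(G)\cong\chi(G)$. This already yields two implications. For $(2)\Rightarrow(1)$: the kernel $N(G)$ is of null entropy, hence virtually unipotent and in particular virtually solvable by Lemma \ref{Oguiso1}, while the quotient $\chi(G)$ is abelian; applying Lemma \ref{ext} inside $\GL(L_{\bbC})$ shows $G$ is virtually solvable. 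For $(3)\Rightarrow(1)$ the identical two ingredients apply: $N(G)\lhd G$ is of null entropy, hence virtually solvable by Lemma \ref{Oguiso1}, and $G/N(G)\cong\bbZ^{\oplus r}$ is abelian, so Lemma \ref{ext} gives virtual solvability of $G$.

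It remains to treat $(2)\Rightarrow(3)$, where one must determine $\chi(G)$ precisely. If $v^2>0$, then $(g(v),g(v))=(v,v)$ forces $\chi(g)\equiv1$, so $G$ fixes $v$ and acts on the negative definite orthogonal complement $v^{\perp}$ preserving the lattice $L\cap v^{\perp}$; being a discrete subgroup of the compact orthogonal group of $v^{\perp}$ it is finite, whence $N(G)=G$ and $r=0$. So the substantive case is $v^2=0$, where $\chi(G)$ is a torsion-free subgroup of $\bbR_{>0}$, and the whole point is to show it is \emph{discrete}, hence trivial or infinite cyclic, giving $r\le1$; note also that once $G\neq N(G)$, Lemma \ref{fin} already forces $N(G)=\ker\chi$ to be finite.

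I expect the discreteness of $\chi(G)$ to be the main obstacle, since an abstract subgroup of $\bbR_{>0}$ need not be cyclic. I would resolve it by first showing that any two loxodromic (positive-entropy) elements of $G$ share \emph{both} fixed rays on $\partial\cC(L_{\bbR})$: otherwise, by the north--south dynamics of loxodromic isometries, a ping-pong argument produces a nonabelian free subgroup of $G$ (cf. \cite{Cantat14}), contradicting the virtual solvability established in $(2)\Rightarrow(1)$. Hence a finite-index subgroup of $G$ stabilizes a single hyperbolic plane $\langle v,w\rangle$ of signature $(1,1)$, acting on it as $\operatorname{diag}(\chi,\chi^{-1})$ and on its negative definite complement through a compact orthogonal group; discreteness of $G$ then forces its dilation image $\chi(G)$ to be discrete in $\bbR_{>0}$, hence infinite cyclic. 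Equivalently, $G$ lies in the stabilizer of the boundary point $[v]$, a group of Euclidean similarities whose dilation image is discrete because $G$ is discrete — which is exactly the classification of elementary discrete isometry groups in \cite[Theorems 5.5.9--5.5.10]{Rat}. In all cases $G/N(G)\cong\chi(G)\cong\bbZ^{\oplus r}$ with $r\le1$, and since $N(G)=\ker\chi$, this gives $(3)$ after passing to a finite-index subgroup, completing the cycle.
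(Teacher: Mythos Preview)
Your overall architecture matches the paper's: $(1)\Rightarrow(2)$ via the Lie--Kolchin cone theorem, the character $\chi(g)$ defined by $g(v)=\chi(g)v$ with $\ker\chi=N(G)$ via Lemma~\ref{e.v.}, and $(3)\Rightarrow(1)$ via Lemmas~\ref{Oguiso1} and~\ref{ext}. The divergence is in how you argue that $\chi(G)\subset\bbR_{>0}$ is discrete for $(2)\Rightarrow(3)$, and there your ping-pong step has a genuine gap.

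Under hypothesis~(2) every element of $G$ already fixes the boundary ray $\bbR_{>0}v$, so any two loxodromic $g_1,g_2\in G$ share at least the fixed point $[v]$. Two loxodromic isometries of hyperbolic space sharing one boundary fixed point lie in the stabilizer of that point, which is the \emph{solvable} similarity group $\bbR_{>0}\ltimes\bigl(\OO(r-2)\ltimes\bbR^{r-2}\bigr)$; the ping-pong lemma requires the four attracting/repelling points to be pairwise distinct and produces no free subgroup in this situation. Thus the contradiction with virtual solvability never fires, and you have not shown $w_{g_1}\parallel w_{g_2}$. (The conclusion is nevertheless true and can be repaired: since $N(G)$ is finite by Lemma~\ref{fin}, after passing to the finite-index centralizer of $N(G)$ the commutator $[g_1,g_2]\in N(G)$ becomes central of finite order, so suitable powers of $g_1,g_2$ commute and therefore share both isotropic eigenrays.) Your alternative appeal to \cite{Rat} is correct but is precisely the result the lemma is restating. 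By contrast, the paper's proof of discreteness is self-contained and arithmetic rather than geometric: for $g$ of positive entropy one has $\chi(g)=\rho(g)^{\pm1}$, and the characteristic polynomial of $g\in\GL(L)\subset\GL_r(\bbZ)$ is a monic integer polynomial of degree $r$ whose roots are $\rho(g)^{\pm1}$ together with complex numbers on the unit circle; if $|\log\chi(g)|<\delta$ then all roots are bounded by $e^{\delta}$, so the integer coefficients are bounded, only finitely many such polynomials occur, and hence only finitely many values of $\chi$ lie near~$1$.
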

\begin{proof}
For readers' convenience, we include the proof.

$(1)\Rightarrow(2)$ Replacing $G$ by a finite-index subgroup, we may assume that the Zariski closure of $G | L_{\bbC}$ in $\GL(L_{\bbC})$ is connected.
Then the assertion (2) is just a consequence of Theorem of
Lie-Kolchin type for a cone (cf. \cite[Corollary 2.3]{KOZ09}).
\medskip

$(2)\Rightarrow(3)$ Replacing $G$, we may assume that there exists a $\lambda_g\in \bbR_{>0}$ such that $g(v) = \lambda_g v$ for any $g\in G$ .
Define a group homomorphism
$$\phi: G \rightarrow (\bbR, +),\ g \mapsto \log\lambda_g.$$

Now the assertion (3) follows from the two claims below.

\begin{claim} \label{norm Null}
$\Ker\phi=N(G)=\{g\in G : \rho(g)=1\}$. In particular, $N(G) \lhd G$.
\end{claim}
\begin{proof} \renewcommand{\qedsymbol}{}
Clearly, $N(G) \subseteq \ker\phi$.
Conversely, suppose $g \in \ker \phi$.
Then $g(v) = v$. Hence $g$ is of null entropy, i.e., $g \in N(G)$, by Lemma \ref{e.v.}.
\end{proof}

\begin{claim}\label{discrete}
$\phi(G)$ is discrete in the additive group $\bbR$, hence $\phi(G) = 0$ or $\bbZ$.
\end{claim}
\begin{proof} \renewcommand{\qedsymbol}{}
It suffices to show that 0 is an isolated point in $\phi(G)$.
For any fixed positive real number $\delta$, consider the set
$$G_\delta:=\{g\in G : |\log\lambda_g| <\delta\} .$$
For any $g\in G_\delta$, $\lambda_g^{\pm 1}$ is bounded by $e^\delta$.
If $g \in N(G)$, then $\lambda_g = 1$.
Suppose that $g$ is of positive entropy.
Then $\lambda_g \ne 1$ and $\rho(g) = \lambda_g^{\pm 1}$ (cf. Lemma \ref{e.v.}).
The minimal polynomial of $g$ is a Salem polynomial
with two real roots $\rho(g)^{\pm 1}$ and other roots on the unit circle (cf. \cite[Proposition 2.5]{Oguiso06},
\cite[Lemma 2.7]{Zhang08}).
Hence the coefficients of these minimal polynomials are all bounded.
Note that these minimal polynomials are defined over $\bbZ$ with degree bounded by $\rank(L)$.
So there are only finitely many such minimal polynomials for all $g\in G_\delta$.
Therefore, the set of all eigenvalues (especially $\lambda_g$)
of such $g \in G_{\delta}$ is finite. Thus $0$ is an isolated point in $\phi(G)$.
\end{proof}

$(3)\Rightarrow(1)$
Replacing $G$, we may assume that
$G/N(G)$ is cyclic and hence solvable.
$N(G)$ is virtually unipotent by Lemma \ref{Oguiso1} and hence virtually solvable.
Thus $G$, regarded as a subgroup of $\GL(L_{\bbC})$
is virtually solvable, by Lemma \ref{ext}.
So the assertion (1) is true.
\end{proof}

\begin{corollary}\label{almost infinite}
Assume one of the equivalent conditions in Lemma {\rm \ref{Lattice equiv}} holds and further that $G \ne N(G)$.
Then replacing $G$ by a finite-index subgroup, we have $N(G)$ is finite and $G$ is almost infinite cyclic.
\end{corollary}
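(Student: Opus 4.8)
The plan is to assemble Lemma \ref{Lattice equiv}, Lemma \ref{fin}, and Lemma \ref{Group}~(2), so that essentially no new computation is required. Since by hypothesis one of the equivalent conditions of Lemma \ref{Lattice equiv} holds, in particular $G$ is virtually solvable, I would first invoke the implication $(1)\Rightarrow(3)$ of that lemma: after replacing $G$ by a suitable finite-index subgroup, we may assume that $N(G) \lhd G$ and that $G/N(G) \cong \bbZ^{\oplus r}$ for some integer $r \le 1$.

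The next step is to pin down $r$. The point to check carefully is that the hypothesis $G \ne N(G)$ survives the passage to a finite-index subgroup. Indeed, if $g \in G$ has $\rho(g) > 1$, then for the finite-index subgroup $G_1$ some power $g^m$ lies in $G_1$, and $\rho(g^m) = \rho(g)^m > 1$, so $g^m \in G_1 \setminus N(G_1)$; moreover $N(G_1) = G_1 \cap N(G)$, since the null-subset is cut out by the condition $\rho = 1$. Hence $G_1 \ne N(G_1)$, and in particular $G/N(G)$ remains non-trivial after the replacement, forcing $r = 1$, i.e., $G/N(G) \cong \bbZ$. With $N(G) \lhd G$ a proper normal subgroup and $G \ne N(G)$, the hypotheses of Lemma \ref{fin} are met, so $N(G)$ is finite.

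Finally, we have exhibited a finite-index subgroup (namely $G$ itself after the replacement) possessing a finite normal subgroup $N(G)$ with quotient $G/N(G) \cong \bbZ$ infinite cyclic; Lemma \ref{Group}~(2) then immediately yields that $G$ is almost infinite cyclic. I do not expect a genuine obstacle here, as the corollary is a packaging of the preceding results; the only delicate point is the bookkeeping in the previous paragraph, namely verifying that $G \ne N(G)$ (hence $r \ne 0$) and the normality of $N(G)$ persist under the reduction to a finite-index subgroup, which is exactly what allows both Lemma \ref{fin} and Lemma \ref{Group}~(2) to apply.
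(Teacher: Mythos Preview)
Your proof is correct and follows essentially the same approach as the paper: invoke Lemma~\ref{Lattice equiv} to reduce to $G/N(G)\cong\bbZ$, apply Lemma~\ref{fin} to obtain finiteness of $N(G)$, and conclude via Lemma~\ref{Group}. You are simply more explicit than the paper about why the condition $G\ne N(G)$ persists when passing to a finite-index subgroup, a point the paper's proof leaves implicit.
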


\begin{proof}
By the assumption and Lemma \ref{Lattice equiv}, we may assume that $G/N(G) \cong \bbZ$ after replacing $G$.
Since $G \ne N(G)$, our $N(G)$ is finite by Lemma \ref{fin}.
Hence $G$ is almost infinite cyclic; see Lemma \ref{Group}.
\end{proof}

The following result is a direct consequence of \cite{Rat}. And it is the lattice-theoretical counterpart of
the result on surfaces or hyperk\"ahler manifolds we will state later on.

\begin{proposition} [{cf. \cite[Theorem 5.5.8]{Rat}}] \label{Lattice}
Let $L$ be a hyperbolic lattice and let $g_1, g_2 \in \OO(L)'$. Suppose that both $g_i$ are of positive entropy, and
there is a nonzero element $v \in \overline{\cC}(L_\bbR)$, such that $g_i(v)=\lambda_i v$ for some real numbers $\lambda_i > 0$.
Then $g_1^{t_1} = g_2^{t_2}$ for some $t_i \in \bbZ \setminus \{0\}$.
\end{proposition}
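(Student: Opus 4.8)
The plan is to work with the group $G := \langle g_1, g_2 \rangle \le \OO(L)'$ and to exploit the common eigenvector $v$ to produce a single abelian invariant, then read off the relation from a finite kernel. Since $g_i(v) = \lambda_i v$ with $\lambda_i > 0$, every element $g \in G$ sends $v$ to a positive multiple $\lambda_g v$ of itself: indeed $g \mapsto \lambda_g$ is multiplicative on products and inverses of the generators, so it is a homomorphism $G \to (\bbR_{>0}, \cdot)$ with values in the \emph{positive} reals. Hence I would define
$$\phi : G \to (\bbR, +), \qquad g \mapsto \log \lambda_g,$$
exactly as in the proof of Lemma \ref{Lattice equiv}. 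By Lemma \ref{e.v.}(1)--(2) one has $g(v) = v$ if and only if $g$ is of null entropy, so $\Ker \phi = N(G)$ (this is Claim \ref{norm Null}); in particular $N(G)$ is a subgroup of $G$, normal in $G$.

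First I would check that $G \ne N(G)$: since $g_1$ is of positive entropy, Lemma \ref{e.v.}(1) forces $\lambda_1 \ne 1$, hence $\phi(g_1) \ne 0$ and $g_1 \notin N(G)$. With $N(G) = \Ker\phi$ a normal subgroup and $G \ne N(G)$, Lemma \ref{fin} applies and yields that $N(G)$ is \emph{finite}. Next, the image $\phi(G)$ is a discrete subgroup of $(\bbR,+)$ by Claim \ref{discrete} (the key input being that the spectral radius of a positive-entropy isometry is a Salem number or a quadratic integer, so only finitely many minimal polynomials, and hence eigenvalues, occur); being nonzero it is therefore infinite cyclic, say $\phi(G) = \bbZ\, c$ with $c > 0$.

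Now I would extract the relation. Writing $\phi(g_1) = m c$ and $\phi(g_2) = n c$ with $m, n \in \bbZ$, both nonzero (again because $\lambda_i \ne 1$), I get $\phi(g_1^{\,n}) = mn\,c = \phi(g_2^{\,m})$, so $g_1^{\,n} g_2^{-m} \in \Ker\phi = N(G)$, i.e.\ $\overline{g_1^{\,n}} = \overline{g_2^{\,m}}$ in the quotient $G / N(G)$. Since $N(G)$ is finite and normal, Lemma \ref{Group}(1) applied to the two elements $g_1^{\,n}$ and $g_2^{\,m}$ produces a positive integer $s$ with $(g_1^{\,n})^s = (g_2^{\,m})^s$, that is $g_1^{\,ns} = g_2^{\,ms}$. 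Setting $t_1 := ns$ and $t_2 := ms$ (both nonzero) gives the desired $g_1^{t_1} = g_2^{t_2}$.

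The conceptual heart, and the step I expect to be the main obstacle, is the passage from \emph{commensurability of the stretch factors} to an \emph{actual identity of powers}. Commensurability, i.e.\ the rationality of $\phi(g_1)/\phi(g_2)$ (equivalently $\log\rho(g_1)/\log\rho(g_2) \in \bbQ$), is not elementary and rests entirely on the Salem/quadratic structure underlying the discreteness in Claim \ref{discrete}; it is what forces the two a priori independent dynamical degrees to be multiplicatively dependent. Once commensurability is in hand, the only remaining gap is that $g_1^{\,n}$ and $g_2^{\,m}$ agree merely modulo the null-subset $N(G)$ rather than on the nose, and this is closed precisely by the finiteness of $N(G)$ from Lemma \ref{fin} together with the pigeonhole argument in Lemma \ref{Group}(1). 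A minor point to handle carefully is the well-definedness and multiplicativity of $\phi$, which uses that the eigenvalues $\lambda_i$ are positive, so that the ray $\bbR_{>0} v$, and not merely the line $\bbR v$, is preserved by all of $G$.
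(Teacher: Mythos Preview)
Your proof is correct and follows essentially the same route as the paper's: set $G=\langle g_1,g_2\rangle$, use the common eigenray to get $G/N(G)\hookrightarrow\bbZ$, invoke Lemma~\ref{fin} for finiteness of $N(G)$, and finish with Lemma~\ref{Group}(1). The paper packages the middle step by citing Lemma~\ref{Lattice equiv} as a black box (and needlessly passes to a finite-index subgroup, which you rightly avoid since condition~(2) there already holds for all of $G$), whereas you unwind that lemma's proof directly via Claims~\ref{norm Null} and~\ref{discrete}; the substance is identical.
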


\begin{proof}
This follows from \cite[Theorem 5.5.8]{Rat}.
We prove it for the convenience of readers.
Let $G=\langle g_1, g_2 \rangle$.
Then $G$ has a common real eigenvector $v \in \overline \cC(L_\bbR)\setminus\{0\}$ and $G \ne N(G)$.
Applying Lemma \ref{Lattice equiv} and replacing $G$ by a finite-index subgroup, we have
$$\pi:G\rightarrow G/N(G) \cong \bbZ.$$
Further, $N(G)$ is finite by Lemma \ref{fin}.
Thus $\pi(g_1^{n_1})=\pi(g_2^{n_2})$ for some $n_1, n_2\in \bbZ\setminus \{0\}$.
Now the proposition follows from Lemma \ref{Group}.
\end{proof}

\begin{setup}
{\bf Consequences on automorphisms of surfaces and hyperk\"ahler manifolds}
\vskip 1pc

Now we apply the above results on hyperbolic lattice to the N\'eron-Severi lattice (see Remark \ref{ns_lattice}) of a projective surface or a hyperk\"ahler manifold.
\end{setup}

\begin{proposition} \label{Finite Null}
Let $X$ be a smooth projective surface and $G \le \Aut(X)$. Suppose the null-subset $N(G)$ is a proper subgroup of (and hence normal in) $G$. Then we have:
\begin{enumerate}
\item[(1)]
  $N(G) | \NS(X)$ is a finite group.
\item[(2)]
  Suppose that $|N(G)|=\infty$. Then $X$ is an abelian surface,
  $H := N(G) \cap \Aut_0(X) = G \cap \Aut_0(X)$ is Zariski-dense in $\Aut_0(X) \, (\cong X)$ and the index $|N(G) : H| < \infty$.
\end{enumerate}
\end{proposition}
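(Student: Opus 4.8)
The plan is to transfer everything to the Néron--Severi lattice and invoke the lattice lemmas. Write $L := \NS(X)/(\torsion)$, which is a hyperbolic lattice by Remark \ref{ns_lattice}, and set $\bar G := G \mid L \le \OO(L)$. Since every $g \in \Aut(X)$ preserves the ample cone, it preserves the chosen positive cone, so in fact $\bar G \le \OO(L)'$. For a surface $\rho(g) = d_1(g) = \rho(g^* \mid \NS_\bbC(X)) = \rho(g \mid L)$, so $g$ is of null entropy if and only if its image $\bar g = g \mid L$ is; consequently $N(\bar G) = N(G) \mid L$. For part (1), I would note that the hypothesis that $N(G)$ is a proper subgroup of $G$ says exactly that $N(\bar G) = N(G) \mid L$ is a subgroup of $\bar G$ and that $\bar G \ne N(\bar G)$ (a positive-entropy $g \in G \setminus N(G)$ maps to $\bar g \notin N(\bar G)$). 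Then Lemma \ref{fin}, applied to $\bar G \le \OO(L)'$, gives that $N(G) \mid L$ is finite; as $\NS(X)$ and $L$ differ only by a finite torsion subgroup, $N(G) \mid \NS(X)$ is finite too, proving (1).

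For part (2), first I would extract that $\Aut_0(X) \ne 1$. Since $N(G) \mid \NS(X)$ is finite by (1), Lemma \ref{Finite res} shows $H := N(G) \cap \Aut_0(X)$ has finite index in $N(G)$; as $|N(G)| = \infty$, both $H$ and $\Aut_0(X)$ are infinite. Because $\Aut_0(X)$ acts trivially on the discrete group $\NS(X)$, each of its elements has $d_1 = 1$ and is of null entropy, so $G \cap \Aut_0(X) \subseteq N(G)$; combined with the obvious inclusion $H \subseteq G \cap \Aut_0(X)$ this already yields the asserted equality $H = N(G) \cap \Aut_0(X) = G \cap \Aut_0(X)$, with $|N(G):H| < \infty$.

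Next I would pin down $X$. If $b_1(X) = 0$, then Lemma \ref{irrat} (applicable since $\Aut_0(X) \ne 1$) forces $\Aut(X)$ to be of null entropy, contradicting $G \ne N(G)$; hence $b_1(X) > 0$, i.e.\ the irregularity $q(X) \ge 1$. On the other hand $G \ne N(G)$ supplies an automorphism of positive entropy, so by the classification of surfaces (cf.\ \cite{Cantat99}) $X$ is bimeromorphic to a rational surface, a $K3$ surface, an Enriques surface, or a complex torus. The first three have $q = 0$, a bimeromorphic invariant, so they are excluded by $q(X) \ge 1$; thus $X$ is bimeromorphic to a $2$-dimensional complex torus and, being projective, to an abelian surface $A_0$. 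Finally $\Aut_0(X) \ne 1$ rules out a genuine blow-up, since blowing up a nonempty finite set on $A_0$ leaves only finitely many translations invariant and so kills the connected automorphism group; hence $X = A_0$ is itself an abelian surface and $\Aut_0(X) = X$ is its group of translations.

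It remains to prove that $H$ is Zariski-dense in $\Aut_0(X) \cong X$, and this is the step I expect to be the main obstacle, as an arbitrary infinite subgroup of an abelian surface need not be dense. Let $T$ be the identity component of the Zariski closure $\overline H \le X$; it is an abelian subvariety and, $H$ being infinite with $\overline H$ having finitely many components, $\dim T \in \{1,2\}$. I would show $\dim T = 2$. Choose $g \in G \setminus N(G)$ of positive entropy. Since $N(G) \lhd G$ and $\Aut_0(X) \lhd \Aut(X)$, the element $g$ normalizes $H$; writing $g$ as $x \mapsto M(x) + b$ with linear part $M \in \Aut_{\mathrm{grp}}(X)$, conjugation by $g$ acts on the translation group $X$ precisely as $M$, so $M(H) = H$ and hence $M(T) = T$. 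If $\dim T = 1$, say $T = E$ an elliptic curve with $M(E) = E$, then $M$ is block-triangular for $0 \to E \to X \to X/E \to 0$, and its two eigenvalues on $H^{1,0}(X)$ are the derivatives of the induced automorphisms of the elliptic curves $E$ and $X/E$, hence roots of unity of modulus $1$. Then every eigenvalue of $M^* = g^*$ on $H^{1,1}(X)$, and a fortiori on the invariant subspace $\NS_\bbC(X)$, has modulus $1$, so $d_1(g) = \rho(g^* \mid \NS_\bbC(X)) = 1$ and $g$ is of null entropy, contradicting the choice of $g$. Therefore $\dim T = 2$, $\overline H = X$, and $H$ is Zariski-dense, completing (2). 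The remaining reductions (the identification $N(\bar G) = N(G)\mid L$, the finite-torsion comparison, and the triviality of the $\Aut_0(X)$-action on $\NS(X)$) are routine; the crux is playing the positive-entropy element off against a hypothetical invariant elliptic curve.
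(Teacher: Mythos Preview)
Your proof is correct. Part (1) is identical to the paper's argument: pass to the hyperbolic lattice $L = \NS(X)/(\torsion)$, identify $N(G)\mid L$ with $N(G\mid L)$, and invoke Lemma~\ref{fin}.

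For part (2) you take a genuinely different route. The paper lets $G_0$ be the Zariski closure of $H$ in $\Aut_0(X)$, observes that $G_0$ is normalized by $G$, and then appeals to \cite[Lemma~2.14]{Zhang09-Invent} together with the absence of any non-trivial $G$-equivariant fibration (forced by the positive-entropy element) to conclude that $G_0$ has a Zariski-dense orbit in $X$; the albanese map then simultaneously forces $X$ to be an abelian surface and gives the density statement. Your argument instead first uses $b_1(X)>0$ (via Lemma~\ref{irrat}) and Cantat's classification to place $X$ birationally over an abelian surface, then kills possible blow-ups with the elementary observation that a non-trivial blow-up of an abelian surface has trivial $\Aut_0$, and finally handles density by a direct linear-algebra computation: if $\overline H$ had a one-dimensional identity component $E$, the linear part $M$ of any positive-entropy $g$ would stabilize $E$, forcing the eigenvalues of $M$ on $H^{1,0}(X)$ to come from automorphisms of the elliptic curves $E$ and $X/E$ and hence be roots of unity, contradicting $d_1(g)>1$. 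This is more self-contained than the paper's proof (it avoids the external citation to \cite{Zhang09-Invent} and the machinery of algebraic-group quotients), at the cost of treating the ``$X$ is abelian'' and ``$H$ is dense'' conclusions separately rather than in one stroke via the albanese map.
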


\begin{proof}
Set $L := \NS(X)/(\torsion)$.
Replacing $G$ by a suitable finite-index subgroup we may assume that the Zariski-closure of $G | L_{\bbC}$ in $\GL(L_{\bbC})$ is connected. Note that
$N(G)$ is the inverse of $N(G | L)$ via the natural homomorphism $G \to G | L \le \OO(L)'$,
and $N(G) | L = N(G | L)$.
Part (1) follows directly from Lemma \ref{fin}.

For part (2), suppose that $N(G)$ is infinite (but $N(G) | L$ is finite by part (1)).
Then the Kodaira dimension $\kappa(X) \le 1$ since the automorphism group of a variety of general type is finite.
Also $H:= N(G) \cap \Aut_0(X)$ is equal to $G \cap \Aut_0(X)$ and has finite-index in $N(G)$, by Lemma \ref{Finite res}.
Thus $\Aut_0(X) \ne 1$, so $X$ is neither birational to a $K3$ surface nor to an Enriques surface. Since $G \ne N(G)$, our $X$ is not a rational surface by Lemma \ref{irrat}.

The assumption $G \ne N(G)$ and Lemma \ref{e.v.} or \cite[Lemma 2.12]{Zhang08}
imply that no fibration on $X$ is equivariant under the action of
$G$ or its finite-index subgroup,
and hence $X$ is birational to an abelian surface.
(You may also appeal to Cantat's classification of surfaces with at least one automorphism of positive entropy; see \cite{Cantat99}.)

Let $G_0$ be the Zariski closure of $H$ in $\Aut_0(X)$. Then $G_0$ is normalized by $G$.
Thus $X$ is dominated by $G_0$, by the proof of \cite[Lemma 2.14]{Zhang09-Invent} and since $X$
and hence its $G$-equivariant blowup
have no non-trivial $G$-equivariant fibration.
Note that the albanese map
$\alb_X : X \to A:=\Alb(X)$
is surjective, birational and necessarily $\Aut(X)$-equivalent.
$G_0$ induces an action on $A$ and
we denote it by $G_0|A$. Since $G_0|A$ also has a Zariski-dense open orbit in $A$,
the closedness of $G_0$ implies that
$G_0|A = \Aut_0(X) \, (\cong A)$.
Let $B\subset A$ be the locus over which $\alb_X$ is not an isomorphism. Note that $B$ and $\alb_X^{-1}(B)$ are $G_0$-stable.
Since $G_0|A = \Aut_0(X)$ consists of all translations on $A$, we have $B = \emptyset$. Thus $X$ is an abelian surface.
This proves Proposition \ref{Finite Null}.
\end{proof}

Below are consequences of Proposition \ref{Finite Null} and hyperk\"ahler analogue.
Theorem \ref{Surface cyclic} is part of \cite[Theorem 3.2 and Remark 3.3]{Cantat14}.
It also follows from Lemmas \ref{fin} and \ref{Lattice equiv} and Lemma \ref{Group}.
The proof of Theorem \ref{HK cyclic} is similar to the part (1) of Theorem \ref{Surface cyclic},
but Lemma \ref{finite kernel} is also used.

\begin{thm} [{cf. \cite[Theorem 3.2, Remark 3.3]{Cantat14}}] \label{Surface cyclic}
Let $X$ be a smooth projective surface.
Suppose that $G \le \Aut(X)$ is not of null entropy and that $G|\NS_{\bbC}(X)$ is virtually solvable.
Then after replacing $G$ by a finite-index subgroup, we have:
\begin{enumerate}
\item[(1)]
$N(G) | \NS(X)$ is finite and $G | \NS(X)$ is almost infinite cyclic.
\item[(2)]
$G$ is almost infinite cyclic, unless $X$ is an abelian surface and $G \cap \Aut_0(X)$ is Zariski-dense in $\Aut_0(X)$.
\end{enumerate}
\end{thm}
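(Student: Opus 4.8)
The plan is to reduce the whole statement to the hyperbolic N\'eron--Severi lattice and then lift the lattice-theoretic conclusions back to $\Aut(X)$. Set $L := \NS(X)/(\torsion)$, which is a hyperbolic lattice of signature $(1,\rho(X)-1)$ by Remark~\ref{ns_lattice}, and let $\bar G := G\,|\,L \le \OO(L)'$ be the induced action; it preserves the positive cone because automorphisms fix the ample class. The decisive surface-specific input is the identity $\rho(g)=\rho(g^*\,|\,\NS_\bbC(X))$ recorded in the introduction: the entropy of $g\in\Aut(X)$ is already detected on $L$, so $g\in N(G)$ if and only if $g\,|\,L\in N(\bar G)$. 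Consequently $N(\bar G)=N(G)\,|\,L$ and, crucially, $N(G)$ is exactly the preimage of $N(\bar G)$ under $G\to\bar G$. (Passing between the actions on $\NS(X)$ and on its free quotient $L$ alters the picture only by a finite group and is harmless for the conclusions below.)

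For part~(1) I would argue purely on $\bar G$. The hypothesis that $G$ is not of null entropy means $\bar G\neq N(\bar G)$, while $G\,|\,\NS_\bbC(X)$, and hence $\bar G$, is virtually solvable by assumption. Thus, by the implication $(1)\Rightarrow(3)$ in Lemma~\ref{Lattice equiv}, after replacing $\bar G$ (equivalently $G$) by a finite-index subgroup we have $N(\bar G)\lhd\bar G$ and $\bar G/N(\bar G)\cong\bbZ^{\oplus r}$ with $r\le 1$, and $r=1$ since $\bar G\neq N(\bar G)$. Lemma~\ref{fin} then gives $|N(\bar G)|<\infty$, and Lemma~\ref{Group}(2) shows $\bar G$ is almost infinite cyclic. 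Translating back, $N(G)\,|\,\NS(X)$ is finite and $G\,|\,\NS(X)$ is almost infinite cyclic, which is part~(1). (This is exactly the content of Corollary~\ref{almost infinite} applied to $\bar G$.)

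For part~(2) I would lift this structure to $\Aut(X)$, working from now on with the finite-index subgroup just produced, so that $N(\bar G)$ is finite and $\bar G/N(\bar G)\cong\bbZ$. Since $N(G)$ is the preimage of the normal subgroup $N(\bar G)\lhd\bar G$, it is a \emph{normal subgroup} of $G$; it is proper because $G\neq N(G)$, and $G/N(G)\cong\bar G/N(\bar G)\cong\bbZ$. Hence Proposition~\ref{Finite Null} applies. If $N(G)$ is finite, then $G/N(G)\cong\bbZ$ with $N(G)$ finite, so $G$ is almost infinite cyclic by Lemma~\ref{Group}(2). If $N(G)$ is infinite, then Proposition~\ref{Finite Null}(2) forces $X$ to be an abelian surface with $G\cap\Aut_0(X)$ Zariski-dense in $\Aut_0(X)$, which is precisely the exceptional case in the statement. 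This dichotomy proves part~(2).

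The one point needing genuine care rather than bookkeeping is the claim that $N(G)$ is a \emph{subgroup} of $G$: in general the null-subset of a group need not be closed under multiplication, and Proposition~\ref{Finite Null} cannot even be invoked without this. On a surface it comes for free, because $\rho(g)=\rho(g^*\,|\,\NS_\bbC(X))$ identifies $N(G)$ with the preimage of the honest subgroup $N(\bar G)$. This is exactly the feature that is not automatic in higher dimensions, which is why the hyperk\"ahler analogue (Theorem~\ref{HK cyclic}) must additionally control the kernel of $G\to\bar G$ by means of Lemma~\ref{finite kernel}.
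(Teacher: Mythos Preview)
Your proof is correct and follows essentially the same route as the paper's own argument: reduce to the hyperbolic lattice $L=\NS(X)/(\torsion)$, apply Lemma~\ref{Lattice equiv} and Lemma~\ref{fin} (equivalently Corollary~\ref{almost infinite}) to obtain part~(1), and then invoke Proposition~\ref{Finite Null} together with Lemma~\ref{Group} for part~(2). Your explicit identification of $N(G)$ as the preimage of $N(\bar G)$, and the care you take to justify that $N(G)$ is a genuine normal subgroup before invoking Proposition~\ref{Finite Null}, match the paper's reasoning exactly.
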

\begin{proof}
For readers' convenience, we include the proof.
Let $L = \NS(X) /(\torsion)$.
Note that $N(G)$ is the inverse of $N(G|L)$ via the natural homomorphism $G \to G | L$,
and $N(G) | L = N(G | L)$.
Replacing $G$ by a finite-index subgroup, we may assume that $G | L$ is solvable and
the Zariski closure of $G | L_{\bbC}$ in $\GL(L_{\bbC})$ is connected. Then,
as in Lemma \ref{Lattice equiv},
$N(G | L) \lhd G | L$ and
$$G/N(G) \cong (G|L)/(N(G|L)) \cong \bbZ .$$
Since $N(G) \ne G$, Lemma \ref{fin} implies that $N(G) | L$ and hence $N(G) | \NS(X)$ are finite.
Hence part (1) is true; see Lemma \ref{Group}.

If $N(G)$ is finite, then $G$ is almost infinite cyclic by Lemma \ref{Group}.
Otherwise,
we can apply Proposition \ref{Finite Null} and conclude part (2).
\end{proof}

\begin{thm}\label{HK cyclic}
Let $X$ be a projective hyperk\"ahler manifold.
Suppose that $G \le \Bir(X)$ is not of null entropy and that $G | \NS_\bbC(X)$ is virtually solvable.
Then after replacing $G$ by a finite-index subgroup, $N(G)$ is finite and $G$ is almost infinite cyclic.
\end{thm}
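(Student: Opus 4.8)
The plan is to reduce the statement to the lattice-theoretic results already established for subgroups of $\OO(L)'$ and then transport the conclusion back to $G$ itself using the finiteness of the action-kernel on $\NS(X)$ supplied by Lemma \ref{finite kernel}. This last step is exactly what is \emph{not} available in the surface setting, and it is what permits us to bypass the abelian-surface alternative that appears in Theorem \ref{Surface cyclic}(2) and in Proposition \ref{Finite Null}(2).

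First I would set $L := \NS(X)$, which by Remark \ref{ns_lattice} is a hyperbolic lattice and is torsion free since $X$ is simply connected, and consider the natural homomorphism $r_{\NS} : G \to G|L \le \OO(L)'$, $g \mapsto g^*|L$. Because for a bimeromorphic map $g$ of $X$ one has $d_1(g) = \rho(g^*|\NS_{\bbC}(X)) = \rho(g|L)$ (recall $L_{\bbC} = \NS_{\bbC}(X)$), an element $g \in G$ is of null entropy precisely when $g|L$ is; hence $N(G)$ is the $r_{\NS}$-preimage of $N(G|L)$ and $N(G)|L = N(G|L)$. In particular $G|L$ is virtually solvable by hypothesis and is \emph{not} of null entropy, i.e. $G|L \ne N(G|L)$. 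I would then apply Lemma \ref{Lattice equiv} to the group $G|L \le \OO(L)'$: after replacing $G$ by a finite-index subgroup I may assume $G|L$ is solvable and that the Zariski closure of $G|L_{\bbC}$ in $\GL(L_{\bbC})$ is connected, so that the implication $(1)\Rightarrow(3)$ of Lemma \ref{Lattice equiv}, combined with $G|L \ne N(G|L)$ (which forces the rank to be $1$), gives $N(G|L) \lhd G|L$ and
$$(G|L)/N(G|L) \cong \bbZ.$$
Since $G|L \ne N(G|L)$, Lemma \ref{fin} shows that $N(G|L)$ is finite.

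Finally I would pull this information back through $r_{\NS}$. By Lemma \ref{finite kernel} the kernel of $r_{\NS}$ is finite, so $N(G) = r_{\NS}^{-1}(N(G|L))$ is an extension of the finite group $N(G|L)$ by the finite group $\ker r_{\NS}$ and is therefore finite; moreover $N(G) \lhd G$ and $G/N(G) \cong (G|L)/N(G|L) \cong \bbZ$. Thus $G$ possesses a finite normal subgroup $N(G)$ with infinite cyclic quotient, and Lemma \ref{Group}(2) yields that $G$ is almost infinite cyclic. I expect the only genuinely delicate point to be the finiteness of $N(G)$: in the surface case the kernel of the analogous restriction map can be infinite (for an abelian surface $\Aut_0(X)$ contributes, which is the source of the exceptional case in Theorem \ref{Surface cyclic}(2)), whereas for a projective hyperk\"ahler manifold Lemma \ref{finite kernel} forces this kernel to be finite, so no exceptional case survives and the cleaner conclusion holds.
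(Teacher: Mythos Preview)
Your proposal is correct and follows essentially the same approach as the paper's own proof, which tersely says ``argue as in Theorem~\ref{Surface cyclic}(1) and also apply Lemma~\ref{finite kernel}''; you have simply spelled out those steps in full, including the reduction to $G|L$ via Lemma~\ref{Lattice equiv}, the finiteness of $N(G|L)$ via Lemma~\ref{fin}, and the passage back to $G$ via Lemma~\ref{finite kernel} and Lemma~\ref{Group}(2). Your added commentary on why the abelian-surface exception of Theorem~\ref{Surface cyclic}(2) disappears in the hyperk\"ahler setting is accurate and in keeping with the paper's narrative.
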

\begin{proof}
We argue as in Theorem \ref{Surface cyclic} (1) and also
apply Lemma \ref{finite kernel}.
\end{proof}

\section{Results for surfaces and proofs of results in the introduction}

For a surface $X$, a group $G\le \Aut(X)$ being of null entropy has a clear geometric interpretation as follows.
The crucial case has been dealt with in \cite[Theorem 2]{Gizatullin81} and \cite[Theorem 2.1]{Oguiso07}.
\cite[Theorem 2.11]{Cantat14} deals with the cyclic group case, but the general case is similar due to
the canonicity of the fibrations involved.
We state it for comparison with Theorem \ref{HK equiv}.

\begin{thm}[{cf. \cite[Theorem 2.11]{Cantat14}}] \label{Surface equiv}
Let $X$ be a smooth projective surface and let $G \le \Aut(X)$ be a group such that
the induced action $G|\NS(X)$ is an infinite group.
Then $G$ is of null entropy if and only if there is a $G$-equivariant fibration $X \to B$ onto a nonsingular projective curve $B$.
\end{thm}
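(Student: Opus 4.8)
The plan is to prove the two implications separately, deriving the fibration in the nontrivial direction from a single parabolic automorphism and then spreading equivariance to all of $G$ via the canonicity of the resulting fibration. For the easy implication, suppose $\phi : X \to B$ is a $G$-equivariant fibration, so that for each $g \in G$ there is $\alpha(g) \in \Aut(B)$ with $\phi \circ g = \alpha(g) \circ \phi$. Writing $f = [F] \in \NS(X)$ for the class of a fibre $F$, all fibres are algebraically equivalent since $B$ is irreducible, and $g$ carries $\phi^{-1}(b)$ to $\phi^{-1}(\alpha(g)(b))$; hence $g^* f = f$ for every $g \in G$. As $f$ is a nonzero nef class with $f^2 = 0$, it lies in $\overline{\cC}(L_\bbR)$ for $L = \NS(X)/(\torsion)$, and Lemma \ref{e.v.}(1) (with $\lambda = 1$) shows that each $g$ is of null entropy, so $G = N(G)$.

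For the converse, assume $G$ is of null entropy with $G|\NS(X)$ infinite. First I would apply Lemma \ref{Oguiso}(1) and (3) to the group $G | L \le \OO(L)'$ to obtain a nonzero $G$-invariant class $v \in L$ that is nef and isotropic ($v^2 = 0$), spanning the \emph{unique} $G$-invariant isotropic ray in $\overline{\cC}(L_\bbR)$. Since $G|L$ is infinite of null entropy, Lemma \ref{Oguiso1} provides a finite-index unipotent subgroup $U$; being infinite, $U$ contains an element of infinite order, which lifts to some $h_0 \in G$ whose action on $L$ is parabolic (unipotent of infinite order), with $v$ as its unique invariant isotropic class. The crux is then to realize $v$ geometrically: this is exactly the crucial case, and by Gizatullin's theorem \cite[Theorem 2]{Gizatullin81} (see also \cite[Theorem 2.1]{Oguiso07}, and the cyclic case \cite[Theorem 2.11]{Cantat14}) the parabolic automorphism $h_0$ preserves a fibration $\phi : X \to B$ onto a smooth projective curve $B$ whose fibre class spans the $h_0$-invariant isotropic ray, hence is proportional to $v$. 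I expect this upgrade from a numerical isotropic class to an honest morphism to be the main obstacle; it is precisely what the deep results of Gizatullin and Oguiso supply, and it is where the classification of surfaces enters.

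It remains to promote the $h_0$-invariant fibration to a $G$-equivariant one, which I would do by the canonicity of $\phi$. For any $h \in G$, the composite $\phi \circ h : X \to B$ is again a fibration, and its general fibre $h^{-1}(F)$ has class $(h^{-1})^* f = f$ because $h^* v = v$ and $f$ is proportional to $v$. If $F$ and $F'$ are general fibres of $\phi$ and of $\phi \circ h$ respectively, then $F \cdot F' = f^2 = 0$, so $F$ is disjoint from the general fibres of $\phi \circ h$ and must therefore coincide with one of them. Thus $\phi$ and $\phi \circ h$ share the same general fibres, forcing an isomorphism $\bar h \in \Aut(B)$ with $\phi \circ h = \bar h \circ \phi$. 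Hence $\phi$ is $G$-equivariant, which completes the proof.
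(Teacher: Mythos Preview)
Your argument is correct and matches in spirit the remark the paper makes immediately before the theorem: ``\cite[Theorem 2.11]{Cantat14} deals with the cyclic group case, but the general case is similar due to the canonicity of the fibrations involved.'' You carry out exactly this reduction. The paper's actual proof, however, does \emph{not} proceed this way: instead of reducing to a single parabolic element and citing the cyclic case as a black box, it takes the $G$-invariant nef isotropic class $D$ from Lemma~\ref{Oguiso} and constructs the fibration directly by a case-by-case analysis according to the classification of surfaces (properly elliptic, hyperelliptic, irrational ruled: use the canonical fibration; $K3$: Riemann--Roch on $D$; Enriques: pass to the $K3$ cover; rational: invoke Gizatullin's result that $K_X^2=0$ and $|-mK_X|$ gives an elliptic fibration; abelian: extract an elliptic subtorus from a unipotent element and check $G$-equivariance by the uniqueness of $D$). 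Your route is cleaner and more uniform, since the canonicity step you describe works once and for all; the paper's route is more self-contained, since it exhibits the fibration explicitly in each case rather than importing the cyclic result.

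Two small remarks. First, your citations are slightly off: \cite[Theorem~2]{Gizatullin81} is specifically about rational $G$-surfaces, and \cite[Theorem~2.1]{Oguiso07} is the lattice-theoretic structure result (your Lemma~\ref{Oguiso}), not a fibration statement; the result you actually need for the cyclic parabolic case is precisely \cite[Theorem~2.11]{Cantat14}. Second, in the canonicity step you should make explicit that the uniqueness of the $\langle h_0\rangle$-invariant isotropic ray (from Lemma~\ref{Oguiso}(1), using that $h_0|L$ has infinite order) is what forces the fibre class of Cantat's fibration to be proportional to $v$; you say this, but it is the hinge of the argument and deserves emphasis.
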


The remark below shows that one can also handle the case where
$G$ is infinite while $G | \NS(X)$ is finite. See neat and exhaustive results in
\cite{Oguiso14} when $G \le \Bir(X)$ is cyclic.

\begin{remark}
Suppose that $X$ is a compact K\"ahler manifold (resp. a smooth projective
variety) and $G \le \Aut(X)$ is infinite while $G | H^{1,1}(X, \bbC)$ (resp. $G | \NS(X)$) is finite.
By Lemma \ref{Finite res} and its proof in \cite[Lemma 2.6]{CWZ14}, $G_0 := G \cap \Aut_0(X)$ is infinite and has
finite index in $G$.
Let $\overline{G}_0$
be the Zariski-closure of $G_0$ in $\Aut_0(X)$, and $L(\overline{G}_0)$ the linear part of $\overline{G}_0$;
see \cite{Fujiki78} or \cite{Lieberman78}.
Then either the graph of the quotient map $X \dashrightarrow X/\overline{G}_0$
or $X \dashrightarrow X/L(\overline{G}_0)$
as in \cite[Lemma 4.2]{Fujiki78}
gives a non-trivial $G$-equivariant fibration, or $X$ is almost
homogeneous under the action of $L(\overline{G}_0)$, or the albanese map
$\alb_X:X \to A := \Alb(X)$ is $\Aut(X)$- (and hence $\overline{G}_0$-) equivariant bimeromophic
with $\overline{G}_0$ acting densely on both the domain and codomain.
In the second case, $X$ is a unirational variety with $-K_X$ big and $|\Aut(X) : \Aut_0(X)| < \infty$
(cf. \cite[Theorem 1.2]{FZ13}). In the last case, the exceptional locus of
$X \to A$ (resp. its image in $\Alb(X)$) is stable under the action of $\overline{G}_0 \le \Aut_0(X)$
(resp. $\overline{G}_0 = \Aut_0(A) \cong A$), hence this locus is empty, i.e.,
$X \to A$ is an isomorphism.
\end{remark}

\begin{proof}[Proof of Theorem~{\rm\ref{Surface equiv}} (for the convenience of readers)]
Set $L := \NS(X)/(\torsion)$.

For the `if' part: Take a fibre $F$ of the fibration $X \to B$.
The $G$-equivariance of the fibration implies that $g^*[F] = [F]$ for every $g \in G$.
Hence $G$ is of null entropy by \cite[Lemma 2.12]{Zhang08} or Lemma \ref{e.v.}.

For the `only if' part: Suppose that $|G|L| = \infty$ and $G$ is of null entropy. Thus $X$ is not of general type and hence the Kodaira dimension $\kappa(X) \le 1$.
By Lemma \ref{Oguiso} or \cite[Theorem 2.1]{Oguiso07},
there is a unique (up to scalars) nonzero nef divisor $D$ with $D^2 = 0$ and $g^*[D] = [D]$ for all $g \in G$.
Further $D$ can be chosen to be integral.

If $\kappa(X) = 1$ (i.e., properly elliptic surfaces), or $X$ is a hyperelliptic surface and hence has a unique elliptic fibration onto $\Alb(X)$, or $X$ is an irrational ruled surface, then $X$ has a typical fibration which is clearly $G$-equivariant.

It remains to consider (blowups of) $K3$, Enriques, abelian and rational surfaces. We may assume that $X$ is minimal unless it is rational.

If $X$ is a $K3$ surface then the Riemann-Roch theorem
implies that the above nef $D$ is parallel to a fibre of an elliptic fibration and Theorem \ref{Surface equiv} is true.
The Enriques case can be reduced to its $K3$ cover and the pullback of $D$.

Suppose that $X$ is rational. We may assume that the pair $(X, G)$ is minimal. Then,
by \cite[Theorem 2, and p.~104]{Gizatullin81}, $K_X^2 = 0$,
the Iitaka $D$-dimension $\kappa(X, -K_X) = 1$ and for some $m \ge 1$, $|{-}mK_X|$ defines an elliptic fibration which is clearly $G$-equivariant.

Finally, suppose that $X$ is an abelian surface.
By Lemma \ref{Oguiso1}, $G | L$ is virtually unipotent.
Since $G | L$ is infinite,
some $g_0 \in G$ restricts to a unipotent element $g_0^* | L$ which is non-trivial and hence of infinite order.
Since $G$ is of null entropy, we may also assume that $g_0^* | H^0(X, \Omega_X^1)$ is non-trivial and unipotent,
after replacing $g_0$ by its power and using Kronecker's theorem.
Write $g_0 = T_a \circ h$ with $T_a$ a translation and $h$ a group automorphism.
Then $\ker(h - \id_X)$ contains a $1$-dimensional subtorus $F$, an elliptic curve.
Hence $g_0$ permutes cosets of $X/F$, and $g_0^*[F] = [F]$;
see the calculation in \cite[Lemma 2.15]{Zhang09-JDG}.
Since $g_0^*[D] = [D]$, the condition $|\langle g_0 \rangle | L| = \infty$
and the uniqueness in Lemma \ref{Oguiso} imply that $[D] = [F]$ after replacing $D$ by a multiple.
Thus, $g^*[F] = [F]$ for all $g \in G$. So $g(F)$
is also a fibre of $X \to X/F$, otherwise,
$0 < (g(F), F) = (F, F) = 0$,
a contradiction.
Hence $X \to X/F$ is a $G$-equivariant fibration.
Theorem \ref{Surface equiv} is proved.
\end{proof}

The base variety of a Lagrangian fibration is known to be a projective space
when it is smooth. This is the case in the situation of Theorem \ref{HK equiv}.

\begin{proof}[Proof of Theorem~{\rm\ref{HK equiv}}]
Set $L := \NS(X)$ as before.

For the `if' part: Take a hyperplane $H$
on $\bbP^n$, and denote its pullback $\phi^*H$ by $P$.
Then we have $g^*P = \phi^*\alpha(g)^*H$.
Since $\alpha(g)$ is an automorphism of $\bbP^n$, we have $\alpha(g)^*H\sim H$. This implies that $g^*P \sim P$.
Hence $g$ is of null entropy by Lemma \ref{e.v.}.

For the `only if' part: Suppose that $G$ is infinite and of null entropy.
By Lemma \ref{finite kernel}, $G|L$ is also infinite.
By Lemma \ref{Oguiso}, there exists a unique nonzero ray $\bbR_{>0}[D] \subseteq \partial \cC(L_\bbR) \cap \NS_{\bbR}(X)$
such that $g^*D \equiv D$ (i.e., $g^*D \sim D$ in the current case) for any $g \in G$.
Further, $q_X(D) = 0$ and $D$ can be chosen to be an integral divisor, where $q_X$ is the Beauville-Bogomolov quadratic form.
Moreover, $D$ is in the closed movable cone $\Mov(X)$ of $X$ and hence also in the closure of birational K\"ahler cone $\overline{\BK}(X)$ of $X$.

Now by \cite[Corollary 1.1]{Matsushita13},
$D$ gives rise to a rational Lagrangian fibration $\phi:X \ratmap \bbP^n$, i.e.,
there exists a birational map $\tau : X\ratmap X'$ to another hyperk\"ahler manifold $X'$ such that the linear system $|D'|$ gives rise to a Lagrangian fibration $\phi' : X' \to \bbP^n$
which is just the Stein factorization of
$$\Phi_{|D'|} : X' \to \bbP(H^0(X',D'))$$
where $D':=\tau_*D$ and $\phi=\phi'\circ \tau$.
Equivalently, we may also replace $D'$ by a primitive class such that $\Phi_{|D'|}$ itself is already a Lagrangian fibration,
using the fact that $\Pic \bbP^n \cong \bbZ$.
Replacing $(X, D)$ by $(X', D')$ and $G$ by $\tau G \tau^{-1}$,
we may assume that $\Phi_{|D|} : X \to \bbP^n$ is already a holomorphic Lagrangian fibration.
We only have to show that $G \le \Bir(X)$ descends to a regular group action on $\bbP^n$.
This is true because $g^*D \sim D$ for all $g \in G$ implies that $g$ induces an isomorphism
of $\bbP^n = \bbP(H^0(X, D))$.
This proves Theorem \ref{HK equiv}.
\end{proof}

Part (1) below is contained in \cite[Theorem 5.1]{Cantat11},
\cite[Theorem 3.2]{Cantat14} or \cite[Theorem 3.1]{Zhang08}.
For commutative group actions in higher dimension; see \cite[Theorem 1.1]{DS04}.
For an automorphism $g$ of a variety $X$, denote by $\Per(g)$ the set of
$g$-{\it periodic} points, i.e.,
the set of points $x\in X$ such that $g^s(x) = x$ for some integer $s > 0$ depending on $x$.

\begin{thm} [{cf.~\cite[Theorem 3.2]{Cantat14}, \cite[Theorem 5.1]{Cantat11}, or \cite[Theorem 3.1]{Zhang08}}] \label{Surface auto}
Let $X$ be a smooth projective surface. Assume $D$ is a numerically nonzero $\bbR$-divisor
such that $D^2 \ge 0$ (this holds when $D$ is nef). Assume further that
$g_i \in \Aut(X)$ ($i = 1, 2$)
are of positive entropy and polarized by $D$.
Then we have:
\begin{enumerate}
  \item[(1)]
  $g_1^{s_1} = g_2^{s_2}$ holds in $\Aut(X)|\NS(X)$ for some $s_i \in \bbZ \setminus \{0\}$.
  \item[(2)] Suppose that either $\Per(g_1) \cap \Per(g_2) \ne \emptyset$, or $X$ is not birational to an abelian surface.
  Then $g_1^{t_1} = g_2^{t_2}$ holds in $\Aut(X)$ for some $t_i \in \bbZ \setminus \{0\}$.
\end{enumerate}
\end{thm}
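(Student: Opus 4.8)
The plan is to deduce part (1) directly from the lattice-theoretic Proposition \ref{Lattice}, and then to upgrade the cohomological equality to an equality in $\Aut(X)$ by controlling the kernel of the representation on $\NS(X)$, with the abelian case handled separately via the common periodic point.

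For part (1), I would set $L := \NS(X)/(\torsion)$, a hyperbolic lattice, and recall that each $g_i^*$ preserves the ample cone, hence lies in $\OO(L)'$, and that positive entropy means $\rho(g_i^* \mid L) > 1$. After replacing $D$ by $-D$ if necessary (harmless since the polarization constants $\lambda_i$ are positive), the class $v := [D]$ lies in $\overline{\cC}(L_\bbR) \setminus \{0\}$ because $D^2 \ge 0$ and $D \not\equiv 0$, and the polarization hypothesis reads $g_i^*(v) = \lambda_i v$ with $\lambda_i > 0$. Proposition \ref{Lattice} then yields $g_1^{s_1} = g_2^{s_2}$ on $L$. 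Since the torsion subgroup is characteristic and finite, the kernel of the restriction $\Aut(X)\mid\NS(X) \to \Aut(X)\mid L$ is finite, so a further application of Lemma \ref{Group}(1) promotes this to $g_1^{s_1} = g_2^{s_2}$ on all of $\NS(X)$.

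For part (2), let $K := \ker(\Aut(X) \to \Aut(X)\mid\NS(X))$; since $\Aut_0(X)$ acts trivially on the discrete group $\NS(X)$ we have $\Aut_0(X) \subseteq K$, and Lemma \ref{Finite res} gives $|K : \Aut_0(X)| < \infty$, so $K$ is finite exactly when $\Aut_0(X) = 1$. When $X$ is not birational to an abelian surface, Cantat's classification (\cite{Cantat99}) leaves $X$ rational, $K3$, or Enriques. In the $K3$ and Enriques cases $H^0(X, T_X) = 0$, hence $\Aut_0(X) = 1$. In the rational case I would argue that a nontrivial (hence positive-dimensional) $\Aut_0(X)$ forces either a canonical $\Aut(X)$-invariant fibration, contradicting the positive entropy of $g_1$ via Lemma \ref{e.v.}(1), or almost-homogeneity, whence $|\Aut(X) : \Aut_0(X)| < \infty$ by \cite{FZ13} and some power of $g_1$ lands in $\Aut_0(X) \subseteq K$, again contradicting positive entropy. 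Thus $\Aut_0(X) = 1$ and $K$ is finite, and applying Lemma \ref{Group}(1) to $G = \langle g_1, g_2\rangle$ with the finite normal subgroup $G \cap K$ gives $g_1^{t_1} = g_2^{t_2}$ in $\Aut(X)$.

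The remaining and genuinely harder case is $X$ birational to an abelian surface, where $\Aut_0(X)$ is a positive-dimensional group of translations and $K$ is infinite, so I must use a common periodic point $p \in \Per(g_1)\cap\Per(g_2)$. Replacing $g_i$ by the powers $g_1^a, g_2^b$ fixing $p$ and applying part (1), I obtain $u_1 = g_1^{as_1}$ and $u_2 = g_2^{bs_2}$ which fix $p$ and agree on $\NS(X)$; the whole group $\langle u_1, u_2\rangle$ then fixes $p$, so its intersection with $K$ lies in the stabilizer $K_p := \{h \in K : h(p) = p\}$. Since the albanese map is an $\Aut(X)$-equivariant birational morphism to $\Alb(X)$, the group $\Aut_0(X)$ embeds into $\Alb(X)$ as translations, and a translation fixing $p$ is the identity; hence $K_p \cap \Aut_0(X) = 1$, and with $|K_p : K_p \cap \Aut_0(X)| < \infty$ this makes $K_p$, and thus $\langle u_1, u_2\rangle \cap K$, finite. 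A final application of Lemma \ref{Group}(1) yields $u_1^s = u_2^s$, i.e. $g_1^{t_1} = g_2^{t_2}$. The main obstacle throughout is precisely this abelian case: ruling out that $u_1 u_2^{-1}$ is an infinite-order translation, which is exactly what the common periodic point hypothesis is designed to prevent.
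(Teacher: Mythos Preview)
Your proof is correct and follows essentially the same strategy as the paper: Proposition~\ref{Lattice} for part~(1), then Cantat's classification together with Fujiki--Lieberman finiteness to control the discrepancy between equality on $\NS(X)$ and equality in $\Aut(X)$, with the abelian case resolved by the observation that a translation fixing a point is the identity. The only cosmetic differences are that you work with the full kernel $K$ of the $\NS(X)$-representation and the stabilizer $K_p$, whereas the paper works with $N(G)$ and an $N(G|L)$-invariant ample class and then pigeonholes the elements $g_1^s g_2^{-s}$ modulo $N(G)\cap\Aut_0(X)$; also, your rational-surface step can be shortened to a single citation of Lemma~\ref{irrat}.
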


\begin{proof}
Let $L := \NS(X)/(\torsion)$ and $G=\langle g_1, g_2 \rangle$.
By assumption $[D]\in \overline \cC(L_{\bbR})\setminus\{0\}$ is a common eigenvector of $G|L$.
So by Lemma \ref{Lattice equiv}, replacing $G$ by a finite-index subgroup, we have $N(G|L)$ is a subgroup of $G|L$ and $G/N(G) \cong (G | L)/(N(G | L)) \cong \bbZ$.
Note that $N(G)$ is the inverse of $N(G | L)$ via the natural homomorphism $G \to G | L \le \OO(L)'$, and $N(G) | L = N(G | L)$ is finite by Lemma \ref{fin}.
Replacing $g_i$ by powers, we may assume that $g_1 = g_2$ (modulo $N(G)$).
Hence part (1) follows from Lemma \ref{Group}.

For part (2), since $G \ne N(G)$ and by \cite[Proposition 1]{Cantat99},
we may assume that either $X$ is a rational surface, or $X$ is a minimal surface which is either a $K3$ surface, an Enriques surface or an abelian surface.

Note that $N(G |L)$ is finite. Take an ample divisor $H'$ on $X$ and set
$$[H] := \sum_{c \in N(G | L)} c^*[H'] .$$
Then
$$N(G) \le \Aut_{[H]}(X) := \{g \in \Aut(X) : g^*[H] = [H]\} .$$
Since $|\Aut_{[H]}(X) : \Aut_0(X)| < \infty$ by \cite[Theorem 4.8]{Fujiki78} or \cite[Proposition 2.2]{Lieberman78},
we have $|N(G) : N(G) \cap \Aut_0(X)| < \infty$.
If $\Aut_0(X) = 1$ (this is true when $X$ is a $K3$ surface or an Enriques surface),
then $N(G)$ is finite and hence part (2) follows from Lemma \ref{Group}.
We may assume that $\Aut_0(X) \ne 1$.
This assumption and $G \ne N(G)$ imply that $X$ is not a rational surface (cf. Lemma \ref{irrat}).

Finally, we assume that $X$ is an abelian surface and
$\Per(g_1) \cap \Per(g_2) \ne \emptyset$.
Replacing $g_i$ by some common power, we may assume that both $g_i$ fix one common point $P \in X$.
Note that for any $s \in \bbN$, we have
$c_s := g_1^s g_2^{-s} \in N(G) \le \Aut_{[H]}(X)$.
Since $|N(G) : N(G) \cap \Aut_0(X)| < \infty$, we have
$c_s = c_t \,\, (\text{\rm modulo} \,\, N(G) \cap \Aut_0(X))$
for some $t > s$. This implies that
$$c_sc_t^{-1} \in N(G) \cap \Aut_0(X) \, \le \, \Aut_0(X).$$
Now since $\Aut_0(X)$ consists of translations and $c_sc_t^{-1}$ fixes the point $P$, we have $c_sc_t^{-1} = \id$.
So $g_1^{t-s} = g_2^{t-s}$ in $\Aut(X)$.
This proves Theorem \ref{Surface auto}.
\end{proof}

\begin{proof}[Proof of Theorem~{\rm\ref{HK auto}}]
Apply Proposition \ref{Lattice} to $L := \NS(X)$ (also need Lemma \ref{finite kernel} and Lemma \ref{Group}).
\end{proof}

\begin{remark}\label{rHK auto}
For the converse direction of Theorem \ref{HK auto}, assume that both $g_i$ in $\Bir(X)$
are of positive entropy such that
$g_1^{t_1}=g_2^{t_2}$ for some $t_i \in \bbZ \setminus \{0\}$.
Replacing $g_i$ by $g_i^{-1}$ if necessary, we may assume that both $t_i > 0$.

Since $g_1$ preserves the closed movable cone $\Mov(X)$ (see \ref{cone}) of $X$ and this cone spans $\NS_{\R}(X)$,
the generalized Perron-Frobenius theorem in \cite{Birkhoff67} (see also Lemma \ref{Bir}) implies that
$g_1^*D \equiv \lambda_1 D$ for some movable $\R$-divisor $D$ and
with $\log \lambda_1 > 0$ the entropy of $g_1$.
Thus $(g_2^{t_2})^*D \equiv \lambda D$
with $\lambda = \lambda_1^{t_1} > 1$. Now the uniqueness result in Lemma \ref{e.v.}
for $g_2^{t_2}$, implies that $g_2^*D \equiv \lambda_2 D$ with $\log \lambda_2$ the entropy
of $g_2$ and $\lambda_2^{t_2} = \lambda$.

If both $g_i$ are automorphisms, we can take $D$ to be a nef divisor.

The above argument also applies to surface automorphisms, so as to get the converse to Theorem \ref{Surface auto} (1).
\end{remark}

\begin{proof}[Proof of Proposition~{\rm\ref{PropA}}]
Denote by $L := \NS(X)/(\torsion)$ so that $G | L$ is infinite by the assumption.
Hence the first assertion is just \cite[Theorem 2.1]{Oguiso07}.
Replacing $G$ by a finite-index subgroup, we may assume that
$G | L \cong \bbZ^{\oplus s}$ for some $1 \le s \le \rho(X) - 2$.

We still have to give a better upper bound for $s$. Note that the Kodaira dimension $\kappa(X) \le 1$
since $G | L$ and hence $G$ are infinite.
If $\kappa(X) \ge 0$, we descend $G$ to a biregular action on the minimal model of $X$ and may assume that $X$ is already minimal, i.e., the canonical divisor $K_X$ is nef.
If $\kappa(X) = 0$, by the classification theory of surfaces the Picard number
$\rho(X) \le h^{1,1}(X) \le 20$
(with the equality possibly holds only when $X$ is a $K3$ surface),
so the proposition follows in this case.

If $\kappa(X) < 0$, $X$ is either a rational surface or an irrational ruled surface.

If $X$ is an irrational ruled surface, replacing $G$ by a finite-index subgroup,
we may assume that $G$ descends to a biregular action on a relatively minimal model $X_m$,
after $G$-equivariant blowdowns of $-1$-curves in the unique $\bbP^1$-fibration on $X$;
replacing $X$ by $X_m$, we may assume
that $X$ is already relatively minimal. Now this $X$ has Picard number two,
both extremal rays of the nef cone of $X$ are fixed by $G$,
contradicting the uniqueness of the $G$-stable nef ray in Lemma \ref{Oguiso}.

If $X$ is rational, we may assume that the pair $(X, G)$ is minimal
and hence $K_X^2 = 0$ by \cite[Theorem 2]{Gizatullin81}.
Thus the Picard number of $X$ is $10$ and hence $s \le 8$.
This proves the proposition.
\end{proof}


\end{document}